\documentclass[12pt]{amsart}

\textwidth = 16.00cm
\textheight = 22.00cm
\oddsidemargin = 0.12in
\evensidemargin = 0.12in
\setlength{\parindent}{0pt}

\usepackage[utf8]{inputenc}
\usepackage{graphicx}
\usepackage{graphicx,color}
\usepackage{latexsym}
\usepackage[all]{xy}
\usepackage{mathrsfs}
\usepackage{enumerate}
\usepackage[shortlabels]{enumitem}
\usepackage{multicol}
\usepackage{lipsum}
\usepackage{amssymb}
\usepackage{tikz}
\usetikzlibrary{cd}
\usetikzlibrary{patterns}
\definecolor{DimGray}{rgb}{0.41, 0.41, 0.41}
\usepackage{float}
\usepackage{bm}
\usepackage{mathptmx}
\usetikzlibrary{shapes.geometric}
\usepackage{dsfont}
\usepackage{cjhebrew}
\usepackage[normalem]{ulem}
\usepackage[alpine]{ifsym}
\usepackage{xpicture}
\usepackage{calculator}
\usepackage{graphicx}
\usepackage{pgf,tikz,pgfplots}
\pgfplotsset{compat=1.15}
\usetikzlibrary{arrows}

\usepackage{amsmath}
\usepackage{amsfonts}
\usepackage{mathrsfs}
\usepackage{amssymb,enumerate}
\usepackage{amsthm}
\usepackage{fancyhdr}
\usepackage{hyperref}
\usepackage{cleveref}

\usepackage{caption}
\usepackage{subcaption}
\usepackage{soul} 

\usepackage[backend=biber,style=numeric, maxbibnames=99,doi=false,isbn=false,url=false]{biblatex}
\renewbibmacro{in:}{%
 \ifentrytype{article}{}{\printtext{\bibstring{in}\intitlepunct}}}

\addbibresource{tauminbiblio.bib}

\def\sideremark#1{\ifvmode\leavevmode\fi\vadjust{\vbox to0pt{\vss 
			\hbox to 0pt{\hskip\hsize\hskip1em           
				\vbox{\hsize2cm\tiny\raggedright\pretolerance10000
					\noindent #1\hfill}\hss}\vbox to8pt{\vfil}\vss}}} %
%

\theoremstyle{plain}
\newtheorem{theorem}{Theorem}[section]
\newtheorem{proposition}[theorem]{Proposition}

\newtheorem*{theorem*}{Theorem}
\newtheorem{lemma}[theorem]{Lemma}
\newtheorem{corollary}[theorem]{Corollary}
\newtheorem{conj}[theorem]{Conjecture}

\theoremstyle{definition}
\newtheorem{defin}[theorem]{Definition}

\newtheorem{ex}[theorem]{Example}

\theoremstyle{remark}
\newtheorem{rem}[theorem]{Remark}
\newcommand{\ord}{\operatorname{ord}}

\newcommand{\frakg}{\mathfrak{g}}

\newcommand{\vu}{\underline{v}}

\newcommand{\con}{\mathbf{c}}
\newcommand{\obeta}{\overline{\beta}}

\newcommand{\pr}{\mathrm{pr}}
\newcommand{\ind}{\mathtt{I}}
\newcommand{\jnd}{\mathtt{J}}

\counterwithin{figure}{section}
\numberwithin{equation}{section}

\title[Tjurina number for a plane curve with two branches]{The Tjurina number of a plane curve with two branches and high intersection multiplicity}

\subjclass[2020]{14H20, 14H50, 32S05}

\keywords{Tjurina number, plane curve singularities}

\author{Patricio Almir\'on}

\author{Marcelo E. Hernandes}

\thanks{The first named author is supported by Grant RYC2021-034300-I funded by MICIU/AEI/10.13039/501100011033 and by European Union NextGenerationEU/PRTR and during the elaboration of this work was also supported by the IMAG–Maria de Maeztu grant CEX2020-001105-M / AEI /10.13039/501100011033 and by Spanish Ministerio de Ciencia, Innovaci\'{o}n y Universidades PID2020-114750GB-C32/AEI/10.13039/501100011033. The second named author is partially supported by CNPq-Brazil.}

\address{Departamento de Matemáticas, Universidad de Zaragoza\\
C. Pedro Cerbuna 12 \\
50009 Zaragoza, Spain.}
\email{patricioalmiron@ugr.es; palmiron@unizar.es}

\address{Departamento de Matem\'atica, Universidade Estadual de Maring\'a\\ Av. Colombo 5790 \\ 87020900 Maring\'a, Paran\'a, Brazil.}
\email{mehernandes@uem.br}

\begin{document}

\begin{abstract}
In this paper we provide a family of reduced plane curves with two branches that have a constant Tjurina number in their equisingularity class, along with a closed formula for it in terms of topological data. 
\end{abstract}

\maketitle


\section{Introduction}
Let \((C_f,\boldsymbol{0})\) be a germ of a reduced plane curve singularity with \(r \geq 1\) branches. One of its main analytic invariants is the Tjurina number, which can be computed as 
\[\tau(C_f)=\dim_{\mathbb{C}}\frac{\mathbb{C}\{x,y\}}{(f,f_x,f_y)},\]
where \(f=0\) is a defining equation for \((C_f,\boldsymbol{0}).\) Despite being a well-studied invariant, several important problems remain open. Among these, determining a closed formula for the minimal Tjurina number in a fixed topological class in terms of topological invariants stands out as one of the most challenging. A closely related and even more difficult problem is the characterization of plane curves with a constant Tjurina number within an equisingularity class.
\medskip

While determining a closed formula for the minimal Tjurina number in a fixed equisingularity class is completely solved for \(r=1\) (see \cite{Altaumin,GHtaumin}), it remains open for \(r > 1\). In fact, for \(r > 1\) a recursive formula was proven by Brian\c{c}on, Granger, and Maisonobe \cite{BGM88} for positive weight deformations of a semi-quasihomogeneous curve \(x^a + y^b\), i.e. plane curves with \(r = \gcd(a, b)\) branches that are all topologically equivalent and have a single Puiseux pair. Recently, Genzmer \cite{Genzmer22,Genzmer24} has provided an algorithm that allows the computation of the minimal Tjurina number in a fixed topological class. However, a closed formula, or even a manageable recursive one in terms of topological data, is still far beyond reach at this moment.
\medskip

The search for curves with a constant Tjurina number in the equisingularity class is even less developed. Except for cases where the equisingularity class contains only one analytic representative,
there are few known irreducible cases: 
irreducible plane curve singularities with value semigroup \(\langle \obeta_0,\dots,\obeta_g\rangle\) satisfying \(\gcd(\obeta_0,\dots,\obeta_{g-1})=2\). The case \(g=2\) was first described in 1990 by Luengo and Pfister \cite{LuengoPfister88}, and the general case for \(g \geq 2\) was provided by Abreu and the second-named author \cite{hernandessemiroots} in 2022. Up to best of the authors' knowledge, there are no cases in the literature of plane curves with $r>1$ branches in which all different analytical representatives in a class of equisingularity have the same Tjurina number.

\medskip

The main goal of this paper is to present a family of plane curve singularities with two branches that have a constant Tjurina number in their equisingularity class but whose branches can achieve distinct Tjurina numbers, along with a closed formula for this number in terms of topological data. 
\medskip

To provide such a family, we have used the ideas of \cite{hernandessemiroots}. There, a computation of the Tjurina number in terms of the set of values of K\"{a}hler differentials plays a crucial role. For an irreducible plane curve, Berger \cite{Berger63} shows that $\mu(C)-\tau(C)=\sharp(\Lambda_f\setminus S)$
where \(\mu(C)\) is the Milnor number, $S$ is the semigroup of values of \(C\) and $\Lambda_C$ is the set of values of the K\"{a}hler differentials of $C$. Our first main result, Theorem \ref{thm:Tjurinaformula}, presents a generalization of this result for any reduced plane curve. More precisely, we show that for a reduced plane curve \(C\) with \(r\geq 1\) branches we always have
\[\mu(C)-\tau(C)=d(\overline{\Lambda_C}, S)\]
where $d(\overline{\Lambda_f},S)$ indicates the distance between the values set $\overline{\Lambda_f}:=\Lambda_f\cup\{0\}$ and $S$ as presented in Section 2. This result has its own interest as it is provided for any \(r\geq 1\) and presents an alternative method to compute the Tjurina number of a reduced plane curve to the ones presented in \cite{bayeretal,hefezhernandes24}.
\medskip

The next step is a careful study of the set of values of the K\"{a}hler differentials  $\Lambda_C$ . In  \cite{bayeretal,hefezhernandes24} some relations are presented that allow us to express $\tau(C)$ in terms of the Tjurina number of irreducible components $\{C_i;\ i=1,\ldots ,r\}$ of $C$, the intersection multiplicity of $[C_i,C_j]_0$ and an analytical invariant $\Theta_i$ for $2\leq i\leq r$ related with the values of K\"ahler differentials of $C$. In Section 3, we study the set $\Lambda_C$ for $C=C_1\cup C_2$ considering $C_1$ and $C_2$ equisingular plane branches with value semigroup $\langle \overline{\beta}_0,\ldots ,\overline{\beta}_g\rangle$ and $I:=[C_1,C_2]_0>e_{g-1}\overline{\beta_g}$ where $e_{g-1}=gcd(\overline{\beta}_0,\ldots ,\obeta_{g-1})$. In this case, considering logarithmic differentials, we describe the infinite fibers of $\Lambda_C$ (see Subsection 3.1) that allow us to compute $\Theta_2$ and to show that the conductor of $\Lambda_C$ depends only on $[C_1,C_2]_0$ and the multiplicity of $C_i$ (see Theorem \ref{thm:conductorkahler1}). Therefore, we show that in this case the conductor only depends on topological data and thus it is independent of the analytic type of the curve. This result also has its own interest as for the family of irreducible plane curve singularities with value semigroup \(\langle \obeta_0,\dots,\obeta_g\rangle\), where \(\gcd(\obeta_0,\dots,\obeta_{g-1})=2\) provided in \cite{hernandessemiroots} the conductor of \(\Lambda\) may change but the Tjurina number is still constant. Then, our family with two branches has even a stronger behavior in this sense.
\medskip

These two ingredients, the generalization of Berger's result and the description of the infinite fibers of \(\Lambda_C,\) are enough to achieve our main goal. In Section \ref{sec:Tjurina2}, considering $C=C_1\cup C_2$ such that $C_1$ and $C_2$ sharing the same value semigroup $\langle \overline{\beta}_0,\ldots ,\overline{\beta}_g\rangle$ and $I>e_{g-1}\overline{\beta_g}$, we prove (Theorem \ref{thm:taumin1})
\[\tau(C)=2I+\mu(C_1)-1\]
that is, the Tjurina number is given by only topological datas, i.e. the intersection multiplicity and the Milnor number \(\mu(C_1)=\mu(C_2)\) of the branches. This result gives us new examples of topological classes where the Tjurina number is constant. Moreover, in contrast to the irreducible case, the number of possible plane curves with two branches belonging to this family is quite large. In addition, our formula allows to provide a new proof in this particular case of the inequality \(\mu(C)/\tau(C)< 4/3\) provided by the first named author \cite{quotpat} for any plane curve.
\medskip

A deeper analysis of our results reveals that achieving similar results in more general cases presents an interesting yet challenging problem. To conclude the paper, we discuss some of the key difficulties in these cases and propose conjectures that could guide future investigations.

\section{Value sets of fractional ideals}\label{sec:semigroup}
Let \(C_f\) be a (reduced) germ of complex plane curve singularity with equation $f=\prod_{i=1}^{r}f_i=0$ where $f_i\in\mathbb{C}\{x,y\}$ is irreducible. Each $f_i$ defines a branch $C_i$ and its analytical type is characterized by the local ring \(\mathcal{O}_i:=\mathbb{C}\{x,y\}/(f_i)\) up to $\mathbb{C}$-algebra isomorphism. The field of fraction $\mathcal{K}_i$  of $\mathcal{O}_i$ is isomorphic to $\mathbb{C}(t_i)$ and associated to it we have a canonical discrete valuation $v_i:\mathcal{K}_i\rightarrow\overline{\mathbb{Z}}:=\mathbb{Z}\cup\{\infty\}$. The isomorphic image of $\mathcal{O}_i$ in $\mathbb{C}(t_i)\cong \mathcal{K}_i$ can be given by $\mathbb{C}\{t_i^n,\sum_{j\geq n}a_jt_i^j\}$ where $n$ is the multiplicity of $f_i$, the set $\{n, j;\ a_j\neq 0\}$ does not admit a nontrivial common divisor and
\begin{equation}\label{puiseux-param}
\varphi_i(t_i)=\left (t_i^n,\sum_{j\geq n}a_jt_i^j\right )
\end{equation}
is a Newton-Puiseux parametrization of $C_i$. Notice that we have $f_i\left (t_i^n,\sum_{j\geq n}a_jt_i^j\right )=0$, or equivalently $f_i\left (x,\sum_{j\geq n}a_jx^\frac{j}{n}\right )=0$. We call $s_i(x)=\sum_{j\geq n}a_jx^\frac{j}{n}$ a Puiseux series of the branch $C_i$.

If $\overline{h}\in\mathcal{O}_i$ with $h\in\mathbb{C}\{x,y\}$, then we get $v_i(\overline{h})=\ord_{t_i}\varphi_i^*(h)=[f_i,h]_0$, where $[f_i,h]_0:=[C_{f_i},C_h]_0$ denotes the intersection multiplicity of $C_{f_i}$ and $C_h$ at the origin.

The image of $v_i$ of $\mathcal{O}_i\setminus\{0\}$, that is
\[S(C_i):=\{v_i(\overline{h})\in\mathbb{N}\ :\ \overline{h}\in\mathcal{O}_i, \overline{h}\neq 0\}\]
is the semigroup of values of $C_i$ and it is a numerical semigroup whose minimal generating set  can be computed from a Newton-Puiseux parametrization (cf. \cite{zariski}) as follows. 

If $f\in\mathbb{C}\{x,y\}$ is irreducible with Newton-Puiseux parametrization $(t^n,\sum_{j\geq n}a_jt^j)$, let $(z)\subseteq \mathbb{Z}$ be the set of multiples of $z$, $\beta_0=n$ and set
\begin{align*}
e_{i-1} &=\mathrm{gcd}\{ \beta_0, \ldots , \beta_{i-1}\}\\
\beta_i &= \min \{j : a_j \neq 0, j \notin (e_{i-1})\} \ \ \mbox{for}\;i=1,\dots, g\;\mbox{where}\;e_g=1.
\end{align*}
    
Let us define 
\begin{equation}\label{eq:definbetabarra}
\obeta_0=\beta_0,\;\obeta_1=\beta_1\quad\text{and}\quad \obeta_{i+1}=n_i\obeta_i+\beta_{i+1}-\beta_i\quad \mbox{where}\ n_i=e_{i-1}/e_i\ \mbox{for}\ 1\leq i< g.
\end{equation}

It follows (cf. \cite{zariski}) that the values semigroup \(S(C_f)\) is minimally generated by the elements \(\obeta_0,\dots,\obeta_g,\) i.e. 
\[
S(C_f)=\langle\obeta_0,\dots,\obeta_g\rangle=\big \{\gamma\in\mathbb{N}\;: \;\gamma=m_0\obeta_0+\cdots+m_g\obeta_g\ \ \mbox{with} \ \ m_i\in\mathbb{N},\ \ \mbox{for}\; \ i=0,\dots,g\big \}.
\]
Since $S(C_f)$ is a numerical semigroup it admits a conductor $\con(S(C_f))$, that is the minimun element in $S(C_f)$ such that $\con(S(C_f))+\mathbb{N}\subset S(C_f)$  and it can be computed (cf. \cite{zariski}) by \begin{equation}\label{fy-cond}
\con(S(C_f))=\sum_{i=1}^{g}(n_i-1)\obeta_i-\obeta_0+1=v(f_y)-\obeta_0+1.
\end{equation}

The topological (equisingularity) class of the branch $C_f$ is totally determined by its values semigroup. 

\medskip
For $f=\prod_{i=1}^{r}f_i$ with $r>1$ the topological class of $C_f$ is also characterized by a semigroup as we describe in the sequel. 

The total ring of fraction $\mathcal{K}$ of the local ring $\mathcal{O}=\mathbb{C}\{x,y\}/(f)$ is isomorphic to $\prod_{i=1}^{r}\mathbb{C}(t_i)$. If $\pi_i: \mathcal{K}\rightarrow \mathcal{K}_i$ denotes the natural projection then we can consider $\vu :\mathcal{K}\rightarrow \overline{\mathbb{Z}}^r$ defined by $\vu (q)=(v_1(q),\dots,v_r(q))$ where $q\in\mathcal{K}$ and $v_i(q)$ stands for $v_i(\pi_i(q))$. 

In what follows we set $\ind :=\{1,\ldots ,r\}$ and we consider the product order on $\mathbb{Z}^r$, that is, given $\alpha=(\alpha_1,\dots,\alpha_r), \beta=(\beta_1,\dots,\beta_r)\in \mathbb{Z}^r$,
\[
\alpha\leq \beta\Longleftrightarrow\;\alpha_i\leq\beta_i\;\,\mbox{for all}\,\ i \in \ind.
\]

The values semigroup of \(C_f\) is the additive submonoid of \(\mathbb{N}^r\) defined by \[ S=S(C_f):=\{\vu(h)=(v_1(h),\ldots,v_r(h))\in\mathbb{N}^r\; :\;h\in\mathcal{O},\,h\ \mbox{is not a divisor of}\ 0\}. \]

For $\jnd=\{j_1,\ldots ,j_k\}\subset \ind$ and $\alpha=(\alpha_1,\ldots ,\alpha_r)\in\mathbb{Z}^r$ we put $pr_J(\alpha)=(\alpha_{j_1},\ldots ,\alpha_{j_k})\in\mathbb{N}^k$. Notice that \(S\subsetneq S(C_1)\times\cdots\times S(C_r)\) and $pr_J(S)=S(C_{f_{\jnd}})$ where $f_J=\prod_{j\in\jnd}f_j$. 

Some elementary properties of the semigroup of values \(S\) are the following (see \cite{Delgmanuscripta1}):
\begin{enumerate}
	\item If \(\alpha,\beta\in S\), then 
 \[
 \min\{\alpha,\beta\}:=(\min\{\alpha_i,\beta_i\})_{i\in \ind}\in S.\]
	\item If \(\alpha,\beta\in S\) and \(j\in \ind\) with \(\alpha_j=\beta_j\), then there exists \(\epsilon\in S\) such that \(\epsilon_j>\alpha_j=\beta_j\) and \(\epsilon_i\geq\min\{\alpha_i,\beta_i\}\) for all \(i\in \ind\setminus\{j\}\), with equality if \(\alpha_i\neq\beta_i\). 
	\item The semigroup $S$ has a conductor \(\con_S=\con(S),\) which is defined to be the minimal element of \(S\) such that $\gamma\in S$ whenever \(\gamma\geq \con_S\). Moreover, we get (see \cite{Delgmanuscripta1}) 
 \begin{equation}\label{conductor}
 \con_S=(c_1+\sum_{i\in\ind\setminus\{1\}}I_{1,i},\ldots ,c_r+\sum_{i\in\ind\setminus\{r\}} I_{r,i}),
 \end{equation} where $c_i=\con(S(C_i))$ and $I_{i,j}=[f_i,f_j]_0$.
\end{enumerate}

\begin{rem}\label{cond-milnor} The conductor of the semigroup of a plane curve $C$ defined by $f$ is closely related to its Milnor number $\mu(C)=\dim_{\mathbb{C}}\mathbb{C}\{x,y\}/(f_x,f_y)$. In fact, if $f$ is irreducible then $\mu(C)=\con_S$ and for $f=\prod_{i=1}^rf_i$ we get
$$\mu(C)=\sum_{i=1}^rpr_i(\con_S)-r+1=\sum_{i=1}^rc_i+2\sum_{1\leq i<j\leq r}I_{i,j}-r+1.$$
\end{rem}

For \(r>1,\) the semigroup $S$ is no longer finitely generated, but it is finitely determined (see \cite{Delgmanuscripta1,CDGlondon}). However, if we allow $\vu(h)\in S\subset \overline{\mathbb{Z}}^r$ for {\it any} $h\in\mathcal{O}$, that is, $v_i(0)=\infty$, then $(S,\min ,+)$ is a finite generated semiring (see \cite{hernandesSemiring}).

Now, for a given \(A\subseteq \overline{\mathbb{Z}}^r,\) \(\alpha\in\mathbb{Z}^r\) and an index subset $\emptyset\neq \jnd\subset \ind$, we define
\[
F_\jnd(A,\alpha)=\big\{\beta\in A\;:\;\beta_j=\alpha_j\quad\forall j\in \jnd\quad \text{and}\quad \beta_k>\alpha_k\quad\forall k\notin \jnd \big\},
\]
\[
\overline{F}_\jnd(A,\alpha)=\big\{\beta\in A\;:\;\beta_j=\alpha_j\quad\forall j\in \jnd\quad \text{and}\quad \beta_k\geq\alpha_k\quad\forall k\notin \jnd \big\}.
\]
The fiber of \(\alpha\) in \(A\) is defined as \(F(A,\alpha)=\cup_{i=1}^{r}F_i(A,\alpha).\)

The fibers \(F(S,\alpha)\) are important in order to determine \(S\) in terms of its projections (see \cite{Delgmanuscripta1}). 

An element \(\gamma\in S\) is called a maximal element of \(S\) if \(F(S,\gamma)=\emptyset.\) If, moreover,  \(F_\jnd(S,\gamma)=\emptyset\) for all \(\jnd\subset \ind\) such that \(\emptyset\neq \jnd\neq \ind\), then \(\gamma\) is said to be absolute maximal. On the other hand, if \(\gamma\) is a maximal and if \(F_\jnd(S,\alpha)\neq \emptyset\) for all \(\jnd\subset \ind\) such that \(\sharp\jnd\geq 2\), then \(\gamma\) will be called relative maximal. It is easily checked that the set of maximal elements of \(S\) is finite.

\begin{defin}\label{inf-fiber} Let $A\subset\overline{\mathbb{Z}}^r$ be a set satisfying the properties (1), (2) and (3). We will say that \(\alpha\) has infinite fiber in \(A\) with respect to \(\jnd\subset \ind\), writing $F_{\jnd}^A(\alpha)=\infty$, if there exists \(\beta\in \overline{F}_\jnd(A,\alpha)\) such that \(pr_{\ind\setminus\jnd}(\beta)\geq pr_{\ind\setminus\jnd}(\con_A).\)
\end{defin}

Observe that \(\alpha\) has infinite fiber in \(A\) with respect to \(\jnd\subset \ind\) is the same as saying that
\[\{ \delta \in \mathbb{N}^r \mid pr_\jnd(\delta) = pr_\jnd(\alpha), pr_{\ind\setminus\jnd}(\delta) \geq pr_{\ind\setminus\jnd}(\con_A) \} \subseteq A,\]
or equivalently, there exists $\beta\in F_{\jnd}(A,\alpha)$ such that $\beta+\overline{F}_{\jnd}(\mathbb{N}^r,0)\subset \overline{F}_{\jnd}(A,\alpha)$. There are several other characterizations of infinite fibers see \cite[Proposition 2.4]{Delgmanuscripta1} and \cite[Lemma 1.8]{delgadogorenstein} for further details.

Many of the above properties hold for fractional ideals of $\mathcal{O}$ and its set of values. For the  convenience of the reader, we present them in the next section.
\medskip

\subsection{Basic properties of value set of fractional ideals}

A fractional ideal $J\subset \mathcal{K}$ of $\mathcal{O}$ is a $\mathcal{O}$-module such that there exists a regular element $h\in\mathcal{O}$ satisfying $hJ\subset\mathcal{O}$. It follows that for any fractional ideal $J$ of \(\mathcal{O}\) its value set \(E:=\vu (J)\) is a relative ideal of \(S,\) i.e. \(S+E\subseteq E\) and there exists \(\gamma\in S\) such that \(\gamma+E\subseteq S.\) 

By \cite{Danna1,Danna2}, for any two fractional ideals \(J_2\subset J_1\) of $\mathcal{O}$ we can compute the length \(l(J_1/J_2)\) (as $\mathcal{O}$-modules) by comparing their value sets by means a saturated chain as follows. 

For a fractional ideal \(J\subset \mathcal{K},\) a saturated chain in \(E=\vu (J)\) is a sequence
\[\alpha^0<\alpha^1<\cdots<\alpha^n\]
of elements in \(E\) such that for every element \(\epsilon\in\mathbb{Z}^r\) such that \(\alpha^i<\epsilon<\alpha^{i+1}\) one has \(\epsilon\notin E.\) Such a chain is said to have length \(n.\) According to \cite[Proposition 2.3]{Danna1}, 
any saturated chain in \(E\) between \(\alpha^0\) and \(\alpha^n\) has the same length. This property allows us to define a distance function between two elements in $E$: if \(\alpha^1,\alpha^n\in E\) with \(\alpha^1<\alpha^n\) then its distance in \(E\), denoted by \(d_{E}(\alpha^1,\alpha^n),\) is the length of any saturated chain in \(E\) with \(\alpha^1\) as initial element and \(\alpha^n\) as final element. 
\medskip

As for \(S,\) the value set of a fractional ideal also satisfies properties \((1),(2)\) and \((3)\) (see \cite{Danna1}). So, if \(J\) is a fractional ideal of $\mathcal{O}$, then its value set \(E\) has always a minimum \(m_E:=\min\{\alpha\in E\}\) and a conductor \(\con_E:=\min\{\gamma\in E\;\colon\;\gamma+\mathbb{N}^r\subseteq E\}.\) 

In this way, the colength of fractional ideals can be computed according the following 
\begin{theorem}\cite[Section 2]{Danna1}\label{defin:distance}
    Let \(J_2\subset J_1\) be fractional ideals of \(\mathcal{O}\) with \(E_i=\vu(J_i)\) for $i=1,2$. Then,
    \[l(J_1/J_2)=d(E_1\setminus E_2):=d_{E_1}(m_{E_1},\con_{E_2})-d_{E_2}(m_{E_2},\con_{E_2}).\]    
\end{theorem}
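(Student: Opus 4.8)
The plan is to compute the length through the valuation filtration of each ideal and to match it, step by step, with the saturated-chain distance on the value sets. For a fractional ideal $J$ with $E=\vu(J)$ and a lattice point $\alpha\in\mathbb{Z}^r$ I introduce the $\mathcal{O}$-submodule
\[
J(\alpha)=\{h\in J\;:\;\vu(h)\geq\alpha\}=J\cap\underline{t}^{\alpha}\overline{\mathcal{O}},
\]
where $\overline{\mathcal{O}}=\prod_{i}\mathbb{C}\{t_i\}$ is the normalization and $\underline{t}^{\alpha}=(t_1^{\alpha_1},\dots,t_r^{\alpha_r})$; note $J(m_E)=J$. The first step I would carry out is an elementary count: for a coordinate direction $e_j$, the linear functional sending $h$ to the coefficient of $t_j^{\alpha_j}$ in $\pi_j(h)$ has kernel exactly $J(\alpha+e_j)$ inside $J(\alpha)$, so $J(\alpha)/J(\alpha+e_j)$ embeds in $\mathbb{C}$ and has dimension $0$ or $1$; it equals $1$ precisely when there is $\beta\in E$ with $\beta\geq\alpha$ and $\beta_j=\alpha_j$. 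Crucially this condition is phrased purely in terms of $E$.

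Telescoping this along any monotone lattice path from $a$ to $b$ (with $a\leq b$), the intrinsic quantity $\rho_E(a,b):=\dim_{\mathbb{C}}J(a)/J(b)$ equals the number of \emph{achieved} unit steps on the path, hence depends only on $E$, is additive, and is path-independent. I would then prove that $\rho_E=d_E$ on $E$. Both are additive along $\leq$-increasing sequences: $\rho_E$ by dimension additivity, and $d_E$ by the composition of saturated chains furnished by \cite[Proposition 2.3]{Danna1}. It therefore suffices to compare them on a single saturated step $\gamma<\gamma'$ of $E$ (one with no element of $E$ strictly between), where $d_E(\gamma,\gamma')=1$ by definition.

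The crux, and the step I expect to be the main obstacle, is to show $\rho_E(\gamma,\gamma')=1$ for such a saturated step, i.e. that along a monotone lattice path from $\gamma$ to $\gamma'$ exactly one unit step is achieved. This is genuinely a statement about the good-semigroup structure rather than about lattice distance: a saturated step may be a diagonal jump (already for the node $(0,0)<(1,1)$ is saturated in the value semigroup, yet the corresponding module quotient $\mathcal{O}/\mathfrak{m}$ is one-dimensional). I would extract this from properties (1)--(3), using property (2) to control how two elements sharing a coordinate force a further element of $E$, thereby guaranteeing that precisely one coordinate can be "closed off'' across a saturated step.

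Finally I would assemble the formula. Choosing $\alpha^{\max}$ with all coordinates so large that $\underline{t}^{\alpha^{\max}}\overline{\mathcal{O}}$ lies in the module conductor of both $J_1$ and $J_2$, one gets $J_1(\alpha^{\max})=\underline{t}^{\alpha^{\max}}\overline{\mathcal{O}}=J_2(\alpha^{\max})=:M$. Since $M\subseteq J_2\subseteq J_1$ and $J_i=J_i(m_{E_i})$,
\[
l(J_1/J_2)=\dim_{\mathbb{C}}J_1/M-\dim_{\mathbb{C}}J_2/M=d_{E_1}(m_{E_1},\alpha^{\max})-d_{E_2}(m_{E_2},\alpha^{\max}).
\]
Because $E_2\subseteq E_1$ forces $\con_{E_1}\leq\con_{E_2}$, both value sets contain the full cone $\con_{E_2}+\mathbb{N}^r$, so the saturated chains from $\con_{E_2}$ to $\alpha^{\max}$ agree in $E_1$ and in $E_2$; splitting each distance at $\con_{E_2}$ and cancelling these equal tails gives
\[
l(J_1/J_2)=d_{E_1}(m_{E_1},\con_{E_2})-d_{E_2}(m_{E_2},\con_{E_2})=d(E_1\setminus E_2),
\]
which is the asserted identity. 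Apart from the combinatorial crux above, every ingredient (the coordinate dimension jump, telescoping, and the conductor bookkeeping) is routine.
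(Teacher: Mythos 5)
The paper does not actually prove this statement: it is quoted with a citation to \cite{Danna1} and no proof is given, so there is no internal argument to compare against. Your plan is, in substance, the argument of that reference: the at-most-one-dimensional coordinate jumps \(J(\alpha)/J(\alpha+\mathbf{e}_j)\), the telescoping of \(\rho_E\) along monotone lattice paths, the identification of \(\rho_E\) with the saturated-chain distance \(d_E\), and the conductor bookkeeping at the end are exactly the standard route, and the parts you carry out in detail are correct (including the cancellation of the tails above \(\con_{E_2}\), which uses that both value sets contain the full cone \(\con_{E_2}+\mathbb{N}^r\)).

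The one step you leave open --- that \(\rho_E(\gamma,\gamma')=1\) across a saturated step \(\gamma<\gamma'\) of \(E\) --- does hold, and it closes more easily than you anticipate: property (1) alone suffices, and property (2) is not needed. The step of the path leaving \(\gamma\) itself is always achieved, since \(\gamma\in E\) witnesses the defining condition; this gives \(\rho_E(\gamma,\gamma')\geq 1\). Conversely, suppose a step \(\alpha\to\alpha+\mathbf{e}_j\) with \(\gamma\leq\alpha\), \(\alpha+\mathbf{e}_j\leq\gamma'\) and \(\alpha\neq\gamma\) were achieved by some \(\beta\in E\) with \(\beta\geq\alpha\) and \(\beta_j=\alpha_j\). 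Then \(\delta:=\min\{\beta,\gamma'\}\in E\) by property (1); moreover \(\delta\geq\min\{\alpha,\gamma'\}=\alpha\), and since \(\alpha\geq\gamma\) with \(\alpha\neq\gamma\) this forces \(\delta\geq\gamma\) and \(\delta\neq\gamma\), while \(\delta\leq\gamma'\) and \(\delta_j=\alpha_j<\alpha_j+1\leq\gamma'_j\) force \(\delta\neq\gamma'\). Hence \(\gamma<\delta<\gamma'\) with \(\delta\in E\), contradicting saturation. (Your example \((0,0)<(1,1)\) is consistent with this: only the step leaving \((0,0)\) is achieved, so the quotient is indeed one-dimensional.) With that lemma supplied, your outline assembles into a complete and correct proof.
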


\begin{rem}
    In general, for any two subsets \(E_1,E_2\subseteq\mathbb{Z}^r,\) \(E_2\subsetneq E_1\) satisfying properties \((1),(2),(3)\) we define its distance as \(d(E_1\setminus E_2)=d_{E_1}(m_{E_1},\con_{E_2})-d_{E_2}(m_{E_2},\con_{E_2}).\)

    Also, it is obvious that for any \(\gamma\geq \con_{E_2}\) we have 
    \[d(E_1\setminus E_2)=d_{E_1}(m_{E_1},\gamma)-d_{E_2}(m_{E_2},\gamma).\]
\end{rem}

This method has the disadvantage that we need a lot of information about the value set \(E.\) In order to avoid the use of a saturated chain in \(E,\)  Guzmán and Hefez in \cite{Hefezcolength} provided an alternative method to compute colengths just by using the set of relative maximal points of the value set and its projections. 

\begin{rem}
    The notion of maximal, relative maximal and absolute maximal for a value set is defined in an analogous way as in the semigroup case. For any value set \(E\), we will denote by \(M(E), RM(E), AM(E)\) the sets of maximal, relative maximals and absolute maximals of \(E.\)
\end{rem}

Let us briefly explain the Guzm\'an and Hefez's method to compute colengths without the use of a saturated chain, we refer to \cite{Hefezcolength} for further details. 

For any fractional ideal \(J\) of \(\mathcal{O},\) there is a canonical filtration indexed by \(\alpha\in\mathbb{Z}^r\) defined as 
\[J(\alpha)=\{h\in J\; ;\; \vu(h)\geq \alpha\}.\]
Therefore, given \(J_2\subseteq J_1\) two fractional ideals with value sets \(E_i=\vu(J_i)\), we have that for any \(\gamma\geq \con_{E_2}\) the colength (and hence its distance) is 
\begin{equation}\label{codim}l(J_1/J_2)=l(J_1/J_1(\gamma))-l(J_2/J_2(\gamma)).\end{equation}
Recall that $J_1(\gamma)=J_2(\gamma)$. In this way, we have 
\begin{theorem}\cite[Cor. 11]{Hefezcolength}\label{thm:colengthcalculation}
Let \(J\) be a fractional ideal of \(\mathcal{O}\) with value set \(E=\vu(J)\) and $m_E=\alpha^0$. If \(\gamma\geq \con_E,\) then
\[l\left (\frac{J}{J(\gamma)}\right )=\sum_{i=1}^{r}(\gamma_i-\alpha^0_i-\sharp ((\mathbb{N}+\alpha^0_i)\setminus pr_i(E))-\Theta_i),\]
where \(\Theta_i\) is defined as 
\[\Theta_1=0,\quad \Theta_i=\sharp\bigcup_{\{i\}\subsetneq \jnd\subseteq \{1,\dots,i\}}pr_*(RM(E_{\jnd})),\quad \text{for}\; 2\leq i\leq r,\]
with $pr_*(\alpha_{j_1},\ldots ,\alpha_{j_s})=\alpha_{j_s}$ and $E_{\jnd}=pr_{\jnd}(E)$ if $\jnd=\{j_1,\ldots ,j_s\}$.
\end{theorem}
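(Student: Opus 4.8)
The plan is to translate the colength into a distance in the value set and then prove the formula by induction on the number $r$ of branches. Since $\gamma\geq\con_E$, the value set of $J(\gamma)$ is exactly $\gamma+\mathbb{N}^r$, so its minimum and its conductor both equal $\gamma$; Theorem~\ref{defin:distance} therefore gives $l(J/J(\gamma))=d_E(\alpha^0,\gamma)-d_{\gamma+\mathbb{N}^r}(\gamma,\gamma)=d_E(\alpha^0,\gamma)$, and it suffices to compute the length of a saturated chain from $\alpha^0=m_E$ to $\gamma$. The base case $r=1$ is then immediate: such a chain in $E\subseteq\mathbb{Z}$ has one step for each element of $E$ in $[\alpha^0,\gamma)$, so $d_E(\alpha^0,\gamma)=\sharp(E\cap[\alpha^0,\gamma))=\gamma-\alpha^0-\sharp((\mathbb{N}+\alpha^0)\setminus E)$, which is the asserted formula because $\Theta_1=0$ and all gaps of $E$ above $\alpha^0$ lie below $\con_E\leq\gamma$.

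For the inductive step I would compare $E$ with its projection $\bar E=pr_{\ind\setminus\{r\}}(E)$. Since $pr_i(\bar E)=pr_i(E)$ for $i<r$ and every projection $E_\jnd$ with $\jnd\subseteq\{1,\dots,r-1\}$ coincides whether formed inside $E$ or inside $\bar E$, the terms $\Theta_i$ with $2\leq i\leq r-1$ are unchanged; moreover $m_{\bar E}=pr_{\ind\setminus\{r\}}(\alpha^0)$ and $\bar\gamma=pr_{\ind\setminus\{r\}}(\gamma)\geq\con_{\bar E}$, so the first $r-1$ summands of the formula are supplied by the induction hypothesis applied to $\bar E$, and the theorem reduces to the single identity
\[d_E(\alpha^0,\gamma)-d_{\bar E}(m_{\bar E},\bar\gamma)=\sharp\big(pr_r(E)\cap[\alpha^0_r,\gamma_r)\big)-\Theta_r.\]
To establish this I would fix a saturated chain $\alpha^0<\cdots<\gamma$ in $E$ whose projection to $\bar E$ is again saturated, and classify its steps by the coordinates that strictly increase: a step is \emph{vertical} if only the $r$-th coordinate grows and \emph{diagonal} if the $r$-th coordinate grows together with some coordinate $<r$. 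Writing $V$ and $D$ for the numbers of vertical and diagonal steps, the non-vertical steps realize a saturated chain in $\bar E$, so $d_{\bar E}=d_E-V$; on the other hand the coordinate $pr_r$ is non-decreasing along the chain and strictly increases exactly at the vertical and diagonal steps, so if the chain crosses each value of $pr_r(E)\cap[\alpha^0_r,\gamma_r)$ exactly once then $V+D=\sharp(pr_r(E)\cap[\alpha^0_r,\gamma_r))$. Combining the two yields $d_E-d_{\bar E}=\sharp(pr_r(E)\cap[\alpha^0_r,\gamma_r))-D$, and the theorem follows once $D=\Theta_r$.

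The heart of the argument, and the step I expect to be the main obstacle, is the identity $D=\Theta_r$ together with the control on the chain used above. Using properties (1)--(3) of the value set, a diagonal step is forced precisely at a point $\beta$ whose $r$-th coordinate cannot be raised while the other coordinates are held fixed but can be raised jointly with some of them; by the characterization of maximal and relative maximal elements this is exactly the condition that $pr_\jnd(\beta)$ be a relative maximal of $E_\jnd$ for some index set $\{r\}\subsetneq\jnd\subseteq\{1,\dots,r\}$. The delicate points are (i) choosing the chain, via the min-stability property (1) and the lifting property (2), so that its projection to $\bar E$ is saturated and so that each value of $pr_r(E)\cap[\alpha^0_r,\gamma_r)$ is crossed exactly once, neither skipped nor repeated; and (ii) showing that the $r$-th coordinates produced by the diagonal steps range, without repetition, over $\bigcup_{\{r\}\subsetneq\jnd\subseteq\{1,\dots,r\}}pr_*(RM(E_\jnd))$, so that $D=\Theta_r$. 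Once these counting bijections are verified, the displayed identity holds, and summing the contributions for $i=1,\dots,r$ reconstructs the stated formula and closes the induction.
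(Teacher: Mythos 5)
The paper itself offers no proof of this statement: it is quoted verbatim from Guzm\'an--Hefez \cite[Cor.~11]{Hefezcolength}, so your attempt can only be judged on its own merits. Your outer skeleton is sound: the reduction of $l(J/J(\gamma))$ to the distance $d_E(\alpha^0,\gamma)$ via Theorem~\ref{defin:distance}, the base case $r=1$, the observation that $\bar E=pr_{\ind\setminus\{r\}}(E)$ again satisfies properties (1)--(3) with $m_{\bar E}=pr_{\ind\setminus\{r\}}(\alpha^0)$ and $\bar\gamma\geq \con_{\bar E}$, and the fact that $\Theta_2,\dots,\Theta_{r-1}$ only involve $E_{\jnd}$ with $\jnd\subseteq\{1,\dots,r-1\}$ and are therefore inherited by $\bar E$, are all correct, and they do reduce the theorem to the displayed identity $d_E(\alpha^0,\gamma)-d_{\bar E}(m_{\bar E},\bar\gamma)=\sharp\bigl(pr_r(E)\cap[\alpha^0_r,\gamma_r)\bigr)-\Theta_r$.

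The gap is that the two counting claims you label (i) and (ii) are not peripheral ``delicate points'': they are the entire content of the theorem, and you assert them rather than prove them. For (i), nothing guarantees that an arbitrary saturated chain in $E$ projects to a saturated chain in $\bar E$, nor that its last coordinate visits every element of $pr_r(E)\cap[\alpha^0_r,\gamma_r]$ (a saturated step $\beta<\beta'$ can perfectly well skip values of $pr_r(E)$, and the split of a chain into vertical/horizontal/diagonal steps is not chain-independent even though the total length is); you need an explicit construction from properties (1)--(3), not an appeal to existence. For (ii), your ``forced diagonal step'' criterion detects points $\beta\in E$ at which certain fibers of $E$ itself are empty, which for $r=2$ does match $RM(E)$ and makes the argument complete there; but for $r\geq 3$ the quantity $\Theta_r$ is the cardinality of a \emph{union} over all $\jnd$ with $\{r\}\subsetneq\jnd\subseteq\{1,\dots,r\}$ of last coordinates of relative maximals of the \emph{projections} $E_{\jnd}$ --- objects living in lower-dimensional lattices that need not lift to points of $E$ lying on your chain, and whose contributions overlap and must be counted once. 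Matching the diagonal steps bijectively with this union is precisely what the sequence of lemmas in \cite{Hefezcolength} accomplishes, and without it your induction does not close beyond $r=2$.
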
 
 
As an application of Theorem \ref{thm:colengthcalculation} we will provide an alternative way to compute the delta invariant $\delta(C)$ of a reduced plane curve $C$. 

\begin{ex}\label{delta} Let \(C=\cup_{i\in \ind}C_i\) be a reduced plane curve such that each branch $C_i$ is defined by $f_i$. If $\mathcal{O}$ denotes its local ring, then its normalization \(\overline{\mathcal{O}}\) is isomorphic to $\mathbb{C}\{t_1\}\times\cdots \times\mathbb{C}\{t_r\}$. Since \(\mathcal{O}\) is a Gorenstein ring (see \cite{delgadogorenstein}),  the conductor ideal \(\mathcal{C}:=\{z\in\overline{\mathcal{O}}\;\colon\;z\overline{\mathcal{O}}\subset\mathcal{O}\}\) is such that
\[\delta(C)=l(\frac{\overline{\mathcal{O}}}{\mathcal{O}})=l(\frac{\mathcal{O}}{\mathcal{C}})=\frac{1}{2}l(\frac{\overline{\mathcal{O}}}{\mathcal{C}}).\]

 Notice that \(\vu(\overline{\mathcal{O}})=\mathbb{N}^r\) and \(\vu(\mathcal{C})=\con_S+\mathbb{N}^r\), in particular, these values sets do not have maximal points, that is, \(\Theta_i(\vu(\overline{\mathcal{O}}))=\Theta_i(\vu(\mathcal{C}))=0\) for all \(i\in\ind\).  In addition, we get $$m_{\vu(\overline{\mathcal{O}})}=\con_{\vu(\overline{\mathcal{O}})}=(0,\ldots ,0)\in\mathbb{N}^r\ \ \mbox{and}\ \ 
m_{\vu(\mathcal{C})}=\con_{\vu(\mathcal{C})}=\con_S.$$

According to (\ref{codim}) we get $l(\frac{\overline{\mathcal{O}}}{\mathcal{C}})=l(\frac{\overline{\mathcal{O}}}{\overline{\mathcal{O}}(\con_S)})-l(\frac{\mathcal{C}}{\mathcal{C}(\con_S)})$. Notice that $\mathcal{C}(\con_S)=\mathcal{C}$, that is $l(\frac{\mathcal{C}}{\mathcal{C}(\con_S)})=0$ and, by Theorem \ref{thm:colengthcalculation} we have
$$l(\frac{\overline{\mathcal{O}}}{\overline{\mathcal{O}}(\con_S)})=\sum_{i\in\ind}(pr_i(\con_S)-0-\sharp(\mathbb{N}+0)\setminus pr_i(\mathbb{N}^r))-0))=\sum_{i\in\ind}pr_i(\con_S)=\sum_{i\in\ind}(c_i+\sum_{j\neq i}I_{i,j}),$$
where the last equality follows by (\ref{conductor}).

Since \(c_i=2\delta_i\) where \(\delta_i=\delta(C_i)\) is the delta invariant of the branch we get 
\[\delta(C)=\frac{1}{2}l \left(\frac{\overline{\mathcal{O}}}{\mathcal{C}}\right)=\frac{1}{2}l \left(\frac{\overline{\mathcal{O}}}{\overline{\mathcal{O}}(\con_S)}\right)=\frac{1}{2}\sum_{i\in\ind}(c_i+\sum_{j\neq i}I_{i,j})=\sum_{i\in\ind}(\delta_i+\sum_{j<i}I_{i,j}).\]
In this way, we recover the well-known formula for the delta invariant of the curve $C$ and, as an immediate consequence of Proposition \ref{defin:distance} we also obtain that 
\[\delta(C)=\frac{1}{2}l(\frac{\overline{\mathcal{O}}}{\mathcal{C}})=l(\frac{\mathcal{O}}{\mathcal{C}})=d(\mathcal{O}\setminus\mathcal{C})=d_S(0,\con_S)-d_{\vu(\mathcal{C})}(\con_S,\con_S)=d_S(0,\con_S).\]
\end{ex}

In \cite{hefezhernandes24} is presented a way to compute the data $\Theta_i$ given in Theorem \ref{thm:colengthcalculation} without knowing the relative maximal of the values set of a fractional ideal or any information of the values set of $E$. Let us present this result.

Given $J\subset \mathcal{K}$ a fractional ideal of $\mathcal{O}$ with $E:=\vu(J)$ and $\pi:\mathcal{K}\rightarrow\mathcal{K}_i$ the natural projection we put
$$\mathcal{N}_i(J):=J\cap\ker\pi_i\ \ \ \mbox{and}\ \ \ \mathcal{N}_{\jnd}(J):=\cap_{j\in\jnd}\mathcal{N}_j(J).$$
In this way, setting $[1,i)=\{1,\ldots ,i-1\}$ for $1<i\leq r$ we get (cf. \cite[Cor. 2.8]{hefezhernandes24}) 
\begin{equation}\label{thetai}
\Theta_i=\sharp (\pr_i(E)\setminus v_i(\mathcal{N}_{[1,i)}(J))).\end{equation}
We consider $\mathcal{N}_{[1,1)}(J)=pr_i(E)$ to obtain $\Theta_1=0$ as in Theorem \ref{thm:colengthcalculation}. In this way, we get the following

\begin{theorem}\cite[Cor. 2.9]{hefezhernandes24}\label{thm:colengthannihilator}
  Let \(J_2\subset J_1\) be fractional ideals of \(\mathcal{O}\) with \(E_i=\vu(J_i)\) for $i=1,2$. Then,
\[l\left (\frac{J_1}{J_2}\right )=
\sum_{i\in\ind}\left ( \sharp(pr_i(E_1)\setminus pr_i(E_2))-\sharp(pr_i(E_1)\setminus v_i(\mathcal{N}_{[1,i)}(J_1)))+
\sharp(pr_i(E_2)\setminus v_i(\mathcal{N}_{[1,i)}(J_2))) \right ).\]
\end{theorem}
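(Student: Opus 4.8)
The plan is to derive the formula purely by combining the two ingredients recalled just above — the filtration identity of Theorem \ref{thm:colengthcalculation} and the expression \eqref{thetai} for $\Theta_i$ — and then to reduce the leftover terms, coordinate by coordinate, to the stated projection counts via an elementary gap-counting argument.

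First I would record the structural consequences of the inclusion. Since $J_2\subseteq J_1$ we have $E_2\subseteq E_1$, whence $\pr_i(E_2)\subseteq\pr_i(E_1)$ for every $i\in\ind$, and $\con_{E_1}\le\con_{E_2}$ (any conductor of the smaller set works for the larger one). Fixing any $\gamma\ge\con_{E_2}$, so that $\gamma\ge\con_{E_1}$ as well, and recalling $J_1(\gamma)=J_2(\gamma)$, equation \eqref{codim} gives $l(J_1/J_2)=l(J_1/J_1(\gamma))-l(J_2/J_2(\gamma))$. Applying Theorem \ref{thm:colengthcalculation} to $J_1$ and to $J_2$ and subtracting, the $\gamma_i$ terms cancel, so each $i\in\ind$ contributes
\begin{align*}
&\big((m_{E_2})_i-(m_{E_1})_i\big)-\sharp\big((\mathbb{N}+(m_{E_1})_i)\setminus \pr_i(E_1)\big)+\sharp\big((\mathbb{N}+(m_{E_2})_i)\setminus \pr_i(E_2)\big)\\
&\qquad-\Theta_i(E_1)+\Theta_i(E_2).
\end{align*}
Substituting $\Theta_i(E_k)=\sharp\big(\pr_i(E_k)\setminus v_i(\mathcal{N}_{[1,i)}(J_k))\big)$ from \eqref{thetai} produces exactly the two $\Theta$-type summands that appear in the statement, so it only remains to identify the first displayed line with $\sharp(\pr_i(E_1)\setminus\pr_i(E_2))$.

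For that last identification — which I expect to be the only genuine computation — I would first use property (1) to note that $m_{E_k}$ is the coordinatewise minimum of $E_k$, so that $(m_{E_k})_i=\min\pr_i(E_k)$; writing $a=(m_{E_1})_i\le b=(m_{E_2})_i$ and $P_k=\pr_i(E_k)$, both $P_1,P_2\subseteq\mathbb{Z}$ are bounded below and coincide with a full half-line beyond their finite conductors. For $\gamma$ large one has $\sharp\big(P_k\cap[(m_{E_k})_i,\gamma)\big)=(\gamma-(m_{E_k})_i)-\sharp\big((\mathbb{N}+(m_{E_k})_i)\setminus P_k\big)$, since all gaps lie below the conductor. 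Subtracting the two counts, and using $P_2\cap[a,b)=\emptyset$ (because $\min P_2=b$) together with $P_2\subseteq P_1$, yields $\sharp\big((P_1\setminus P_2)\cap[a,\gamma)\big)$, which for $\gamma$ past both conductors equals $\sharp(P_1\setminus P_2)=\sharp(\pr_i(E_1)\setminus\pr_i(E_2))$. This is precisely the first line above, and summing over $i\in\ind$ gives the claimed equality.

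The whole argument is a bookkeeping reduction built on the cited theorems, so there is no conceptual obstacle; the only point demanding care is the coordinatewise gap count, where one must track the two distinct minima $(m_{E_1})_i$ and $(m_{E_2})_i$ and invoke finiteness of the gap sets to discard the tail contributions. Organizing those finite counts correctly is therefore the main (and rather mild) difficulty.
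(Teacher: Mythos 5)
Your derivation is correct: the paper itself gives no proof of this statement (it is quoted from \cite{hefezhernandes24}), but the intended route is exactly the one you take — combine \eqref{codim} with Theorem \ref{thm:colengthcalculation} applied to $J_1$ and $J_2$ at a common $\gamma\geq\con_{E_2}$, substitute \eqref{thetai} for the $\Theta_i$, and identify the remaining terms with $\sharp(pr_i(E_1)\setminus pr_i(E_2))$ by the coordinatewise gap count. Your bookkeeping, including the treatment of the two minima $(m_{E_1})_i\leq(m_{E_2})_i$ and the finiteness of the gap sets, checks out.
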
 

Notice that Theorem \ref{defin:distance}, Theorem \ref{thm:colengthcalculation} (and (\ref{codim})) and Theorem \ref{thm:colengthannihilator} provide alternative ways to compute colength of fractional ideals according to the data available in each situation. 

In this paper, we are interested in computing the codimension of a particular fractional ideal that gives us an important analytic invariant of a plane curve: the Tjurina number.

\subsection{Tjurina number}\label{subsec:tjurinaBerger}
Let $C$ be a plane curve defined by $f=\prod_{i\in\ind}f_i$ and $C_i$ be the branch given by $f_i=0$. The Tjurina number of $C$ is $\tau=\tau(f):=\dim_{\mathbb{C}}\mathbb{C}\{x,y\}/(f,f_x,f_y)$. Denoting by $\overline{h}$ the class of $h\in\mathbb{C}\{x,y\}$ in $\mathcal{O}$ and  considering the ideal $\mathbf{J}:=\mathcal{O}\overline{f_x}+\mathcal{O}\overline{f_y}$ we have that $\tau=l(\frac{\mathcal{O}}{\mathbf{J}})$.

An alternative, but equivalent, approach to compute the Tjurina number is by using the module of K\"{a}hler differentials of the curve. Let \(\Omega^1=\mathbb{C}\{x,y\}\textup{d}x+\mathbb{C}\{x,y\}\textup{d}y\) be the \(\mathbb{C}\{x,y\}\)-module of holomorphic forms on \(\mathbb{C}^2\) and consider the submodule $\mathcal{F}(f):=\mathbb{C}\{x,y\}\textup{d}f+f\Omega^1$. 
The module of K\"{a}hler differentials of $C$ is
$$\Omega_f:=\frac{\Omega^1}{\mathcal{F}(f)},$$
that is the $\mathcal{O}$-module $\mathcal{O}\textup{d}x+\mathcal{O}\textup{d}y$ module the relation $\textup{d}f=0$.

If \(\varphi_i=(x_i,y_i)\in\mathbb{C}\{t_i\}\times \mathbb{C}\{t_i\}\) is a parameterization (non necessarily a Newton-Puiseux parameterization) of the branch \(C_i\) and $h(x,y)\in\mathcal{O}$ then we denote $\varphi_i^*(h):=h(x_i,y_i)\in\mathbb{C}\{t_i\}$. In addition, given \(\omega=A(x,y)\textup{d}x+B(x,y)\textup{d}y\in\Omega_f,\) we define 
\[\varphi_i^\ast(\omega)\colon=t_i(\varphi_i(A)\cdot x_i'+\varphi_i(B)\cdot y_i')\in\mathbb{C}\{t_i\},\]
where \(x_i',y_i'\) denote, respectively, the derivative of \(x_i, y_i\in\mathbb{C}\{t_i\}\) with respect to \(t_i.\) We put 

$$\varphi^*(\Omega_f)=\{(\varphi_1^\ast(\omega),\ldots ,\varphi_r^\ast(\omega)) : \omega\in\Omega_f\}\subset \mathcal{K}.$$ 

By \cite[Theorem 3]{bayeretal}, if $Tor(\Omega_f)$ denotes the torsion submodule of $\Omega_f$ then we have $\ker (\varphi^*)=Tor(\Omega_f)$ and the following $\mathcal{O}$-module isomorphism:
\begin{equation}\label{quot-torsion}
\varphi^*(\Omega_f)\cong \frac{\Omega_f}{Tor(\Omega_f)}.
\end{equation}
In this way, $\varphi^*(\Omega_f)$ is a fractional ideal of $\mathcal{O}$ and, considering $v_i(\omega):=v_i(\varphi_i^*(\omega))$, its value set is given by 

\[\Lambda_f=\vu(\varphi^\ast(\Omega_f))=\{\vu(\omega)\colon=(v_1(\omega),\dots,v_r(\omega));\;\;\omega\in \Omega_f\}.\]
Recall that $S\setminus\{0\}\subset\Lambda_f$ so, $\con_{\Lambda_f}\leq\con_S$ and $m_{\Lambda_f}=(\beta^1_0,\ldots ,\beta^r_0)$ where $\beta^i_0=\min\{\alpha\in S(f_i)\setminus\{0\}\}$ is the multiplicity of the branch $C_i$.

The set $\Lambda_f$ is an important analytic invariant of the curve which was used for the analytical classification of plane curves (see \cite{HernandesRodrigues}) and, according to Pol \cite[Proposition 3.31]{PolLogarithmic}, the Jacobian ideal $\mathbf{J}$ is isomorphic to $\varphi^*(\Omega_f)$. Moreover, she shows that 
\begin{equation}\label{relation1}
\vu(\overline{Af_x+Bf_y})=\vu(\overline{A}\textup{d}y-\overline{B}\textup{d}x)+\con_S-\underline{1},\ \ \ \mbox{consequently}\ \ \ \vu(\mathbf{J})=\Lambda_f+\con_S-\underline{1}
\end{equation}
where $\underline{1}=(1,\ldots ,1)\in\mathbb{N}^r$.

In the case \(r=1,\) Berger \cite{Berger63} proved that the Milnor number and the Tjurina number of $C_f$ are related by \(\tau=\mu-\sharp(\Lambda_f\setminus S).\) Since the distance function is the natural generalization for the difference of values set of fractional ideals in the irreducible case, it is natural to ask for a extension of Berger's expression for the case \(r>1.\) From the identity \eqref{relation1} and the previous results, we obtain the following generalization of Berger's result.

\begin{theorem}\label{thm:Tjurinaformula}
    Let \(C\) be a reduced plane curve. With the previous notation, let us denote by \(\overline{\Lambda}=\Lambda_f\cup\{0\}.\) Then, we have 
    \begin{equation}
        \tau=\mu-d(\overline{\Lambda}\setminus S).
    \end{equation}
\end{theorem}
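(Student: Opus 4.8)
The plan is to show that both $\tau$ and the quantity $\mu-d(\overline{\Lambda}\setminus S)$ reduce to one and the same combination of semigroup distances, namely $3\delta-r+1-d_{\overline{\Lambda}}(0,\con_S)$, where I abbreviate $\delta:=d_S(0,\con_S)$. To start on the side of $\tau$, I would use that $\tau=l(\mathcal{O}/\mathbf{J})$ with $\mathbf{J}\subseteq\mathcal{O}$ a fractional ideal, so $\vu(\mathbf{J})\subseteq S$ and Theorem \ref{defin:distance} yields $\tau=d(S\setminus\vu(\mathbf{J}))$. By Pol's identity \eqref{relation1} we have $\vu(\mathbf{J})=\Lambda_f+(\con_S-\underline{1})$, so $m_{\vu(\mathbf{J})}=m_{\Lambda_f}+\con_S-\underline{1}$ and $\con_{\vu(\mathbf{J})}=\con_{\Lambda_f}+\con_S-\underline{1}$. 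Invoking the remark after Theorem \ref{defin:distance}, I may evaluate the distance at any fixed endpoint $\gamma\geq\con_{\vu(\mathbf{J})}$, writing
\[
\tau=d_S(0,\gamma)-d_{\vu(\mathbf{J})}(m_{\vu(\mathbf{J})},\gamma).
\]

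The computation rests on two elementary properties of the distance function, both immediate from the definition of saturated chain: translation invariance, $d_{E+w}(\alpha+w,\beta+w)=d_E(\alpha,\beta)$, and the telescoping identity $d_E(m_E,\gamma)=d_E(m_E,\con_E)+\sum_i(\gamma_i-pr_i(\con_E))$ for $\gamma\geq\con_E$ (valid because $E$ contains every lattice point above $\con_E$). Writing $\gamma=\gamma'+\con_S-\underline{1}$ with $\gamma'\geq\con_{\Lambda_f}$, translation invariance turns the second term into $d_{\Lambda_f}(m_{\Lambda_f},\gamma')$. Telescoping both $d_S(0,\gamma)$ down to $\con_S$ and $d_{\Lambda_f}(m_{\Lambda_f},\gamma')$ down to $\con_{\Lambda_f}$ makes the $\gamma'$-dependence cancel (here one uses $\con_{\Lambda_f}\leq\con_S$), and after substituting $\delta=d_S(0,\con_S)$ I obtain
\[
\tau=\delta-r-d_{\Lambda_f}(m_{\Lambda_f},\con_{\Lambda_f})+\sum_i pr_i(\con_{\Lambda_f}).
\]

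The remaining step is to absorb the adjoined point $0$. Since $m_{\Lambda_f}$ is the product-order minimum of $\Lambda_f$, no element of $\overline{\Lambda}$ lies strictly between $0$ and $m_{\Lambda_f}$, so $0\to m_{\Lambda_f}$ is a single saturated step and, telescoping at $\con_{\overline{\Lambda}}=\con_{\Lambda_f}$,
\[
d_{\overline{\Lambda}}(0,\con_S)=1+d_{\Lambda_f}(m_{\Lambda_f},\con_{\Lambda_f})+\sum_i\bigl(pr_i(\con_S)-pr_i(\con_{\Lambda_f})\bigr).
\]
Substituting this together with $\sum_i pr_i(\con_S)=2\delta$ (Example \ref{delta}) into the previous display rewrites $\tau=3\delta-r+1-d_{\overline{\Lambda}}(0,\con_S)$. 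On the other hand $\mu=2\delta-r+1$ by Remark \ref{cond-milnor}, and by definition $d(\overline{\Lambda}\setminus S)=d_{\overline{\Lambda}}(0,\con_S)-\delta$ — note that adjoining $0$ is precisely what guarantees $S\subseteq\overline{\Lambda}$ with $m_{\overline{\Lambda}}=m_S=0$, so that this distance is defined. Hence $\mu-d(\overline{\Lambda}\setminus S)=3\delta-r+1-d_{\overline{\Lambda}}(0,\con_S)=\tau$, which is the claim.

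I expect the main obstacle to be the bookkeeping with the distance function, specifically the telescoping across the two distinct conductors $\con_{\Lambda_f}\leq\con_S$ and the tracking of the translation by $\con_S-\underline{1}$, together with the observation that adjoining $0$ contributes exactly one extra unit of distance at the bottom; it is this single step that reconciles the $+1$ appearing in Milnor's formula $\mu=2\delta-r+1$. As a consistency check, for $r=1$ every distance is the cardinality of a set of omitted values and the statement collapses to Berger's $\tau=\mu-\sharp(\Lambda_f\setminus S)$, with the contribution of $0$ cancelling on both sides.
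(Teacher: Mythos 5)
Your argument is correct and follows essentially the same route as the paper's proof: both rest on Pol's identity $\vu(\mathbf{J})=\Lambda_f+\con_S-\underline{1}$, evaluate the colength/distance at a cutoff $\gamma$ above both conductors, telescope the two distances down to $\con_S$ and $\con_{\Lambda_f}$ respectively, and account for the single saturated step contributed by adjoining $0$ to $\Lambda_f$. The only cosmetic difference is that you phrase everything via elementary properties of the distance function (translation invariance and telescoping above the conductor) where the paper invokes the colength formulas of Theorems \ref{defin:distance} and \ref{thm:colengthcalculation}, which in this situation reduce to exactly those properties.
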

\begin{proof}
Consider $C=\cup_{i=1}^r C_i$. Since $\mathbf{J}\subset\mathcal{O}$ we have \(E:=\vu(\mathbf{J})\subset S\) and \(\con_E\leq m_E+\con_S.\) As we have remarked, $m_{\Lambda}=(\beta^1_0,\ldots ,\beta^r_0)$ and by (\ref{relation1}) we get $m_E=\con_S+(\beta^1_0,\ldots ,\beta^r_0)-\underline{1}$. Thus, considering the relation (\ref{codim}) with \(\gamma=2\con_S+(\beta_0^1,\dots,\beta_0^r)-\underline{1}\) we have 
\begin{equation}\label{equat1}\tau=l \left(\frac{\mathcal{O}}{\mathbf{J}}\right)=l \left(\frac{\mathcal{O}}{\mathcal{O}(\gamma)}\right)-l \left(\frac{\mathbf{J}}{\mathbf{J}(\gamma)}\right).\end{equation}

Notice that $l(\mathcal{O}/\mathcal{O}(\gamma))=l(\mathcal{O}/\mathcal{O}(\con_S))+l(\mathcal{O}(\con_S)/\mathcal{O}(\gamma))$ with $\vu(\mathcal{O}(\con_S))=\con_S+\mathbb{N}^r$ and $\vu(\mathcal{O}(\gamma))=\gamma+\mathbb{N}^r$. In this way,
by Theorem \ref{defin:distance} and Theorem \ref{thm:colengthcalculation},  we have
\begin{equation}\label{equat2}
\begin{array}{ll}
l \left(\frac{\mathcal{O}}{\mathcal{O}(\gamma)}\right)=l \left(\frac{\mathcal{O}}{\mathcal{O}(\con_S)}\right)+l \left(\frac{\mathcal{O}(\con_S)}{\mathcal{O}(\gamma)}\right) & =d_S(0,\con_S)+\sum_{i=1}^r (pr_i(\con_S)+\beta_0^i-1)\vspace{0.2cm} \\
& =d_S(0,\con_S)+\sum_{i=1}^{r}\beta_0^i+\mu-\underline{1}
\end{array}\end{equation}
where the last equality follows by Remark \ref{defin:distance}.

On the other hand, the relation (\ref{relation1}) between \(\vu(\mathbf{J})\) and \(\Lambda_f\) gives
\[l\left (\frac{\mathbf{J}}{\mathbf{J}(\gamma)}\right )=l\left (\frac{\varphi^*(\Omega_f)}{\varphi^*(\Omega)(\con_S+(\beta_0^1,\ldots ,\beta_0^r))}\right )=l\left (\frac{\varphi^*(\Omega_f)}{\varphi^*(\Omega)(\con_S)}\right )+l\left (\frac{\varphi^*(\Omega_f)(\con_S)}{\varphi^*(\Omega)(\con_S+(\beta_0^1,\ldots ,\beta_0^r))}\right ).\]
Since $\con_{\Lambda_f}\leq\con_S$, we get $\vu(\varphi^*(\Omega_f)(\con_S+(\beta_0^1,\ldots ,\beta_0^r))=\con_S+(\beta_0^1,\ldots ,\beta_0^r)+\mathbb{N}^r$ and $\vu(\varphi^*(\Omega_f)(\con_S))=\con_S+\mathbb{N}^r$, that is, the second above summand is $\sum_{i=1}^{r}\beta^i_0$. In addition, denoting $\overline{\Lambda}=\Lambda_f\cup\{0\}$ we have
$$l\left (\frac{\varphi^*(\Omega_f)}{\varphi^*(\Omega)(\con_S)}\right )=d_{\Lambda_f}((\beta_0^1,\ldots ,\beta_0^r),\con_S)=d_{\overline{\Lambda}}(0,\con_S)-\underline{1}.$$ In this way, we get \begin{equation}\label{equat3}l
\left(\frac{\mathbf{J}}{\mathbf{J}(\gamma)}\right)=d_{\overline{\Lambda}}(0,\con_S)-\underline{1}+\sum_{i=1}^{r}\beta^i_0.
\end{equation}
Therefore, by (\ref{equat1}), (\ref{equat2}) and (\ref{equat3}), we obtain
\[\tau=\mu+d_S(0,\con_S)-d_{\overline{\Lambda}}(0,\con_S)=\mu-d(\overline{\Lambda}\setminus S).\]
\end{proof}


\section{Logarithmic differentials}\label{sec:logdif}

As before, let  \(C\) be a reduced plane curve defined by \(f\). According to (\ref{relation1}), the values set $\vu(\mathbf{J})$ determines and it is determined by the set $\Lambda$. In \cite{PolLogarithmic}, Pol shows that such analytical invariants are related to the values set of residues of logarithmic differentials or equivalently to the values set of the Saito module.
Let us recall these objects and some results concerning to them.

According to Saito \cite{SaitoLogari}, a meromorphic differential \(W\in (1/f)\Omega^1\) is a logarithmic form along \(C\) if there exist \(\eta\in \Omega^1,\) \(p,q\in\mathbb{C}\{x,y\}\) with \(gcd(q,f)=1\) such that \(qW=(p\textup{d}f+f\eta)/f\) or equivalently, $\frac{W\wedge \textup{d}f}{\textup{d}x\wedge\textup{d}y}\in\mathbb{C}\{x,y\}$ and he denotes the set of logarithmic forms along $C$ by $\Omega^1(\log\ C)$. 

Since $W=\omega/f\in \Omega^1(\log\ C)$ is equivalent to get $q\omega\in\mathcal{F}(f)=\mathbb{C}\{x,y\}\textup{d}f+f\Omega^1$ for some $q\in\mathbb{C}\{x,y\}$ coprime with $f$ or $P_f(\omega):=\frac{\omega\wedge\textup{d}f}{\textup{d}x\wedge\textup{d}y}\in (f)$ we can consider the $\mathbb{C}\{x,y\}$-module
\begin{equation}\label{log-dif}
\begin{array}{ll}f\cdot\Omega^1(\log\ C) & =\left \{\omega\in\Omega^1;\ \exists\ q\in\mathbb{C}\{x,y\}, gcd(q,f)=1\ \mbox{such that}\ q\omega\in\mathcal{F}(f)\right \}\vspace{0.2cm}\\
& =\left \{\omega\in\Omega^1;\ P_f(\omega)=\frac{\omega\wedge \textup{d}f}{\textup{d}x\wedge \textup{d}x}\in (f)\right \},\end{array}\end{equation}
called the Saito module associated to \(C\). 

We have that $f\cdot\Omega^1(\log\ C)$ is generated by two elements. Moreover,

\vspace{0.2cm}
{\bf Saito's criterion:} $\{\omega_1,\omega_2\}$ is a set of generators for $f\cdot\Omega^1(\log C)$ if and only if $\frac{\omega_1\wedge\omega_2}{\textup{d}x\wedge\textup{d}y}=uf$ where $u\in\mathbb{C}\{x,y\}$ is a unit.
\vspace{0.2cm}

Notice that $\mathcal{F}(f)\subset f\cdot\Omega^1(\log\ C)$, by \cite[Proposition 3.22]{PolLogarithmic}, we have that $Tor(\Omega_f)$ is isomorphic (as $\mathcal{O}$-module) to $f\cdot\Omega^1(\log\ C)/\mathcal{F}(f)$. In particular, by \cite[Proposition 3.31]{PolLogarithmic} and (\ref{quot-torsion}), we get 
$$\mathbf{J}\cong \varphi^*(\Omega_f)\cong\frac{\Omega^1}{f\cdot\Omega^1(\log\ C)}.$$

Given \(\omega\in f\cdot\Omega^1(\log\ C),\) such that $q\omega=p\textup{d}f+f\eta$ where $\eta\in\Omega^1, p,q\in\mathbb{C}\{x,y\}$ with $gcd(q,f)=1$ the residue of $\omega$ is \(res(\omega)=\overline{p}/\overline{q}\in\mathcal{K}\) where $\overline{h}$ denotes the class of $h\in\mathbb{C}\{x,y\}$ in $\mathcal{O}$. The $\mathcal{O}$-module of logarithmic residues along \(C\) is then defined as 
\[\mathcal{R}_C:=\{res(\omega);\; \omega\in f\cdot\Omega^1(\log\ C)\}\subset\mathcal{K}.\]

We have that $\mathcal{R}_C$ is a fractional ideal of $\mathcal{O}$ and its values set $\Delta_f:=\vu(\mathcal{R}_C)$ satisfies (cf. \cite[Cor. 3.32]{PolLogarithmic})
\begin{equation}\label{delta-lambda}
\alpha\in \Delta_f\Leftrightarrow F(-\alpha,\Lambda_f)=\emptyset.\end{equation}

According to Pol \cite{PolLogarithmic} we have that \(\con(\Delta_f)\) is \(-(\obeta_0^{(1)},\dots,\obeta_0^{(r)})+(1,\dots,1).\) 

\begin{rem}\label{rem-inf-fiber}
If $\ind=\{1,\ldots ,r\}$ with $r>1$ and $C=\cup_{i\in\ind}C_i$ is a plane curve where each branch $C_i$ is defined by $f_i$ then for \(\emptyset\neq \jnd\subsetneq \ind\) we denote $f_{\jnd}=\prod_{j\in\jnd}f_j$. In this case, given 
\(\omega\in f_\jnd\cdot \Omega^1(\log\ C_\jnd)\) 
 whose class module $\mathcal{F}(f)$ is $\overline{\omega}$, we get $v_j(\overline{\omega})=\infty$
for all \(j\in\jnd\). Moreover,
$$\omega\in f_\jnd\cdot \Omega^1(\log\ C_\jnd)\setminus f_{\ind\setminus\jnd}\cdot\Omega^1(\log\ C_{\ind\setminus\jnd})\ \ \ \mbox{if and only if}\ \ \ F^{\Lambda}_{\ind\setminus\jnd}(\vu(\overline{\omega}))=\infty,$$ that is $\vu(\overline{\omega})$ has infinite fiber in $\Lambda=\Lambda_f$ (see Definition \ref{inf-fiber}).
\end{rem}

\subsection{Infinite fibers of diagonal curve with two branches}\label{fiber-diag}

In what follows, we will use the values set of logarithmic residues along $C$ to obtain information on the set $\Lambda$, as the infinite fibers and its conductor, when $C$ is defined by $f=f_1f_2$ and it has two equisingular branches with values semigroup $S(C_1)=S(C_2)=\langle \overline{\beta}_0,\ldots,\overline{\beta}_g\rangle$ and intersection multiplicity 
$I=[f_1,f_2]> n_g\overline{\beta}_g$ that we call {\it diagonal curve with two branches} (see \cite{Delgadoari}).

Before that, let us recall some important fact about irreducible curves. Let $C$ be an irreducible plane curve with values semigroup \(S=\langle\obeta_0,\dots,\obeta_g\rangle\). Without loss of generality, we can assume that $C$ is defined by a Weierstrass polynomial $f\in\mathbb{C}\{x\}[y]$ with $\deg_y(f)=\beta_0$ and Newton-Puiseux parametrization $\varphi$ as (\ref{puiseux-param}). 
\medskip

Given $n\in\mathbb{N}\setminus\{0\}$ we consider the $\mathbb{C}$-vector space 
\begin{equation}\label{Pn}
P_n=\{h\in\mathbb{C}\{x\}[y];\ \deg_y(h)<n\}.
\end{equation} Since $f\in\mathbb{C}\{x\}[y]$ is a Weierstrass polynomial with $\deg_y(f)=\beta_0$, any $H\in\mathbb{C}\{x,y\}$ can be uniquely expressed (by the Weierstrass division theorem) as $H=qf+h$ with $q\in\mathbb{C}\{x,y\}$ and $h\in P_{\beta_0}$. So, $\mathbb{C}\{x,y\}=P_{\beta_0}\oplus f\cdot\mathbb{C}\{x,y\}$ and the classes of $H$ and $h$ in $\mathcal{O}=\frac{\mathbb{C}\{x,y\}}{(f)}$ are equal. 

If $v_f$ indicates the discrete valuation associated to $C$ then we put $v_f(H):=v_f(\overline{H})$ where $\overline{H}$ denotes the class of $H\in\mathbb{C}\{x,y\}$ in $\mathcal{O}$. In this way, $S=v_f(\mathcal{O})=v_f(\mathbb{C}\{x,y\})=v_f(P_{\beta_0})$. Following \cite[Sec. 3]{hernandessemiroots}, we introduce the following \(\mathbb{C}\)-vector spaces:
\[
\mathcal{E}(f) = P_{\beta_0} \, \textup{d}x + P_{\beta_0-1} \, \textup{d}y,\quad  \mathcal{G}(f) = \mathbb{C}\{x,y\} \, \textup{d}f + \mathbb{C}\{x,y\} \, f \, \textup{d}x, 
\]
where $P_{\beta_0}$ is given in (\ref{Pn}).
With the above notation, we have \cite[Lemma 3.3]{hernandessemiroots}
\[
\Omega^1 = \mathcal{E}(f) \oplus \mathcal{G}(f).
\]
Since $\mathcal{G}(f)\subset\mathcal{F}(f)$, if $\omega=\omega_0+\omega_1\in\Omega^1$ with $\omega_0\in\mathcal{E}(f)$ and $\omega_1\in\mathcal{G}(f)$ then the classes $\overline{\omega}$ and $\overline{\omega_0}$ of $\omega$, respectively of $\omega_0$, in $\Omega_f$ are equal and $v_f(\overline{\omega})=v_f(\overline{\omega_0})$. In this way, we get $\Lambda_f=\{v_f(\overline{\omega});\ \omega\in\mathcal{E}(f)\}$. In what follows, to simplify the notation, given $\omega\in\Omega^1$ when we put $v_f(\omega)$ we understand $v_f(\overline{\omega})$ where $\overline{\omega}$ indicates the class of $\omega$ module $\mathcal{F}(f)$. 
\medskip

Let us now consider \(\omega = A \, \textup{d}x - B \, \textup{d}y \in f \cdot \Omega(\log\ C_f)\). According to (\ref{log-dif}) we get  \(A f_y + B f_x =  M f\) for some \(M \in \mathbb{C}\{x,y\}\). In particular, the relations
\begin{equation}\label{log-M}
f_x \omega = A \, \textup{d}f - f M \, \textup{d}y \quad \text{and} \quad f_y \omega = -B \, \textup{d}f + f M \, \textup{d}x,
\end{equation}
allow us to compute the residue of \(\omega\) as \(\text{res}(\omega) = \frac{\overline{A}}{\overline{f_x}} = \frac{\overline{B}}{\overline{f_y}}\). Thus, the value set of logarithmic residues along \(C_f\) can be done by  
\begin{equation}\label{Deltai}
\begin{array}{ll}\Delta_f & =\{v_f(res(\omega))=v_f(B)-v_f(f_y);\; Adx+Bdy\in f\cdot \Omega(\log\ C_f)\} \\
& = \{v_f(B)-(\mu+\obeta_0-1);\; Adx+Bdy\in f\cdot \Omega(\log\ C_f)\},
\end{array}
\end{equation}
where $v_f(f_y)=\mu+\obeta_0-1$ (see (\ref{fy-cond})).
In particular, as we are considering the irreducible case, the property (\ref{delta-lambda}) translates to the fact that 
\begin{equation}\label{relation}
\lambda\in\Lambda_f\ \ \ \Leftrightarrow\ \ \ -\lambda\notin \Delta_f.
\end{equation}

\begin{proposition} \cite[Prop. 3.7]{hernandessemiroots}\label{prop:MBirreducible}
    Let \(\omega = A \, \textup{d}x - B \, \textup{d}y \in \mathcal{E}(f) \cap f \cdot \Omega^1(\log\ C_f),\) i.e. \( B f_x + A f_y = M f.\) Under the previous notation, we have
\[
\varphi^*(M) = \frac{e_k \obeta_{k+1}}b \, t^{v_f(M)} + (\text{h.o.t.}),\quad
\varphi^*(B) = b \, t^{v_f(B)} + (\text{h.o.t.}),
\]
with \(b\in\mathbb{C}^{\ast}\) and \(\nu_f(B) = \nu_f(M) + \obeta_0\), where 
\(
k = \max_{0\leq i<g} \{ i;\  e_i \nmid v_f(res(\omega)) \}.
\)
\end{proposition}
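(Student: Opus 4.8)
The plan is to read off everything from the Newton--Puiseux parametrization $\varphi$ of $C_f$, turning the logarithmic condition into two scalar identities in $\mathbb{C}\{t\}$ and then extracting orders and leading coefficients.

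First I would pull back the defining relation $Bf_x+Af_y=Mf$ along $\varphi$. Since $\varphi^*(f)=0$, this gives $\varphi^*(B)\varphi^*(f_x)=-\varphi^*(A)\varphi^*(f_y)$. Differentiating $f(\varphi(t))=0$ yields $\varphi^*(f_x)\,x'+\varphi^*(f_y)\,y'=0$, and combining the two identities shows $\varphi^*(A)\,x'=\varphi^*(B)\,y'$; equivalently $\varphi^*(\omega)=0$, which merely records that the class of $\omega$ in $\Omega_f$ is torsion, as it must be for $\omega\in f\cdot\Omega^1(\log\ C_f)$. In particular $\varphi^*(A)=\varphi^*(B)\,y'/x'$, so on the branch $A$ is controlled by $B$.

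The next step is to compute $\varphi^*(M)$ itself, which cannot be obtained by dividing by $\varphi^*(f)=0$. Writing $N=Af_y+Bf_x$ and viewing $f(t^n,y)$ as a function of $y$ with a simple zero at $y=y(t)$ for $t\neq 0$, the rule of L'H\^opital gives $\varphi^*(M)=\varphi^*(\partial_y N)/\varphi^*(f_y)$. Substituting $\partial_y N=A_yf_y+Af_{yy}+B_yf_x+Bf_{xy}$, dividing by $f_y$, and using $\varphi^*(f_x)/\varphi^*(f_y)=-y'/x'$ together with $\varphi^*(A)=\varphi^*(B)\,y'/x'$, I obtain
\[
\varphi^*(M)=\varphi^*(A_y)-\varphi^*(B_y)\frac{y'}{x'}+\varphi^*(B)\,\frac{d}{dx}\log\varphi^*(f_y).
\]
Using $v_f(f_y)=\mu+\obeta_0-1$ from (\ref{fy-cond}) and $\ord(x')=\obeta_0-1$, one checks that each of these three summands has order at least $v_f(B)-\obeta_0$; since their sum has nonzero leading coefficient at that order (the asserted constant), this simultaneously gives $v_f(B)=v_f(M)+\obeta_0$, with $b$ the leading coefficient of $\varphi^*(B)=b\,t^{v_f(B)}+\cdots$.

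Finally I would identify the leading coefficient of $\varphi^*(M)$ and the index $k$. Here $v_f(\mathrm{res}(\omega))=v_f(B)-v_f(f_y)$ by (\ref{Deltai}), and its divisibility by the $e_i$ dictates which characteristic level of the parametrization survives the cancellation among the three summands above: writing $f=\prod_j(y-s_j(x))$ over the conjugate Puiseux series and expanding $\frac{d}{dx}\log\varphi^*(f_y)$ and the derivative terms by the exponents $\beta_i$, the contributions cancel for all levels with $e_i\mid v_f(\mathrm{res}(\omega))$, so the first surviving one occurs exactly at $k=\max\{0\le i<g:\ e_i\nmid v_f(\mathrm{res}(\omega))\}$, and it carries the factor $e_k\obeta_{k+1}$ normalized by $b$, giving the stated leading term of $\varphi^*(M)$. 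The main obstacle is precisely this last piece of bookkeeping: one must track, through the arithmetic relating $\beta_i,\obeta_i,e_i$ and $n_i$ in (\ref{eq:definbetabarra}) and the orders of the differences $s_0-s_j$ of conjugate branches, exactly which terms cancel and isolate the single surviving contribution indexed by $k$; the earlier steps are then routine.
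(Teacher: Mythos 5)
This proposition is imported verbatim from \cite[Prop.\ 3.7]{hernandessemiroots}; the paper gives no proof of it, so your attempt can only be judged on its own terms. Your setup is correct and verifiable: pulling back $Bf_x+Af_y=Mf$ does give $\varphi^*(A)x'=\varphi^*(B)y'$, and differentiating $N=Mf$ with respect to $y$ before restricting to the branch (your L'H\^opital step) legitimately yields
\[
\varphi^*(M)=\varphi^*(A_y)-\varphi^*(B_y)\frac{y'}{x'}+\varphi^*(B)\,\frac{(\varphi^*(f_y))'}{x'\,\varphi^*(f_y)} ,
\]
which checks out, e.g., on the Euler form of the cusp. The problem is that everything after this identity is asserted rather than proved, and what is asserted is exactly the content of the proposition. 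First, the claim that each of the three summands separately has order at least $v_f(B)-\obeta_0$ is not justified and is not a formal consequence of anything you established: for a general element of $P_{\obeta_0}$ there is no inequality relating $v_f(A_y)$ to $v_f(A)$ (take $A=y-s_{\le N}(x)$, where $v_f(A)$ is large but $A_y=1$), so any such bound must use the logarithmic condition in an essential way, and you do not show how. Second, and more seriously, the statement $v_f(M)=v_f(B)-\obeta_0$ is equivalent to the \emph{nonvanishing} of the coefficient of $t^{v_f(B)-\obeta_0}$ in the sum, and the identification of that coefficient as $e_k\obeta_{k+1}$ (suitably normalized by $b$ and $\obeta_0$) with $k=\max\{i:\ e_i\nmid v_f(res(\omega))\}$ is the entire theorem. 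Your third summand contributes $\tfrac{v_f(f_y)}{\obeta_0}b=\tfrac{\sum_i(n_i-1)\obeta_i}{\obeta_0}b$ at that order, which is generally different from the asserted constant, so the first two summands must supply a precise, generally nonzero correction; in the cusp example they contribute exactly half of the answer. You defer precisely this cancellation analysis to ``bookkeeping'' over the conjugates $s_0-s_j$ and concede it is ``the main obstacle,'' which makes the argument circular where it says the sum ``has nonzero leading coefficient at that order (the asserted constant).''

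Concretely, the missing idea is the mechanism by which the divisibility of $v_f(res(\omega))=v_f(B)-v_f(f_y)$ by the $e_i$ selects the level $k$. In the source this is handled not by a pointwise expansion of $\tfrac{d}{dx}\log\varphi^*(f_y)$ but through the $G$--adic expansion of $B$ in the semiroots $\frakg_0,\dots,\frakg_g$ (the same structure this paper exploits in Remark \ref{qh} and Lemma \ref{lem:expBM}): the leading $G$--adic monomial of $B$ determines both $b$ and, via the values $e_i\obeta_{i+1}=v_f(\frakg_i\,\textup{d}\frakg_{i+1}/\frakg_{i+1}\cdots)$-type identities, the surviving constant $e_k\obeta_{k+1}$. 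Without carrying out that identification (or an equivalent computation with the conjugate series), your text is a plausible reduction of the problem, not a proof of the proposition.
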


In what follows, let $C_f=C_1\cup C_2$ where $C_1$ and $C_2$ are two equisingular plane branches defined by Weierstrass polynomials $f_1, f_2\in\mathbb{C}\{x\}[y]$ with value semigroup $S_1=S_2=\langle \obeta_0,\ldots ,\obeta_g\rangle$ and intersection multiplicity \(I:=[f_1,f_2]_0\geq n_g\obeta_g,\) where $n_g=e_{g-1}=\gcd(\obeta_0,\ldots ,\obeta_{g-1})$. In particular, there exists a Newton-Puiseux parametrization $\varphi_i(t_i)=(t_i^{\obeta_0},\sum_{k\geq\obeta_0}a^{(i)}_{k}t_i^k)$ for $C_i$ and $i=1,2$ such that 
\begin{equation}\label{equal-coeficient}
\begin{array}{c}
a^{(1)}_{k}=a^{(2)}_{k}\ \mbox{for every}\ k\leq\beta_g\ \mbox{if}\ I>n_g\obeta_g\\
a^{(1)}_{k}=a^{(2)}_{k}\ \mbox{for every}\ k<\beta_g\ \mbox{if}\ I=n_g\obeta_g.\end{array}
\end{equation}

We would like to use Proposition \ref{prop:MBirreducible} in the context of a plane curve with two equisingular branches with \(I\geq n_g\obeta_g.\) To do so, we need the following technical lemmas to generalize Proposition \ref{prop:MBirreducible} in this context.
\medskip
As condition \(I\geq n_g\obeta_g\) implies that both branches have the same value semigroup, let us recall a few facts regarding the maximal contact curves associated to a branch. For a fixed \(i\in\{1,2\}\), let say \(i=1\), any \(h\in\mathbb{C}\{x,y\}\) satisfying \([f_1,h]=\obeta_q\) will be called a maximal contact curve of genus \(q-1\) with \(f_1\).  In \cite{zariski}, Zariski considers a particular set of maximal contact curves $D_q$ for $1\leq q\leq g$ defined by a truncation of the Puiseux series of the curve $C_1$ as follows: 
\[
s^{(1)}_q(x)=\sum_{\tiny\begin{array}{c}
		j\in (\beta_0)\\\ \beta_0\leq j<\beta_1
\end{array}}a^{(1)}_jx^{j/\beta_0}+\cdots+\sum_{\tiny\begin{array}{c}
		j\in (e_{q-1})\\\beta_{q-1}\leq j<\beta_q
\end{array}}a^{(1)}_jx^{j/\beta_0}.
\]

The minimal polynomial $\frakg_q(x,y)\in\mathbb{C}[x,y]$ of $s_q(x)$ is given by

\begin{equation}\label{gq}
\frakg_q(x,y)=\prod_{\epsilon^{\beta_0/e_{q-1}}=1}(y-s^{(1)}_q(\epsilon^{\beta_0}x))
\end{equation}
and the plane branch $D_q$ given by $\frakg_q$ has maximal contact with $C_1$. In addition, $\frakg_q$ is monic with $\deg_y(\frakg_q)=\frac{\overline{\beta_0}}{e_{q-1}}$.
In that follows we put $\frakg_{0}=x$. The branches \(D_q\) are called 
semiroots of \(C_1\) and 
any element \(h\in\mathbb{C}\{x\}[y]\) with $\deg_y(h)<\deg_y(f_1)$, that is, $h\in P_{\beta_0}$, admits a unique \(\mathit{G}\)--adic expansion (cf.  \cite[Chap. 1 and 3]{Abexpansion}) in terms 
of $\mathit{G}=\{\frakg_{0},\frakg_1,\ldots ,\frakg_{g}\}$, that is,

\begin{equation}\label{expansionh} h = \sum_{\alpha = (\alpha_0, \dots, \alpha_g) \in \mathbb{N}^{g+1}} b_{\alpha}\frakg_0^{\alpha_0} \frakg_1^{\alpha_1} \cdots \frakg_{g-1}^{\alpha_{g-1}} \frakg_g^{\alpha_g}, \end{equation}
with $b_{\alpha}\in\mathbb{C}$ and

 \begin{enumerate}
     \item   \( 0 \leq \alpha_k < n_{k}=\frac{e_{k-1}}{e_{k}} \) for \( 1 \leq k \leq g \).
     \item If we denote \(h_\alpha=\frakg_0^{\alpha_0} \frakg_1^{\alpha_1} \cdots \frakg_{g-1}^{\alpha_{g-1}} \frakg_g^{\alpha_g}\), then \(v_1(h_\alpha)\neq v_1(h_\gamma)\) for \(\alpha\neq\gamma.\)
 \end{enumerate}
 
 \begin{rem}\label{qh}
Notice that if $h\in\mathbb{C}\{x\}[y]$ is given as (\ref{expansionh}), that is 
\[h=\sum_{\gamma=(\gamma_0,\ldots ,\gamma_g)\in\mathbb{N}^{g+1}}b_{\gamma}\frakg_0^{\gamma_0}\frakg_1^{\gamma_1}\cdots\frakg_{g-1}^{\gamma_{g-1}}\frakg_g^{\gamma_g}\]
and \(v_1(h)=\alpha_0\obeta_0+\dots+\alpha_q\obeta_q\) with \(\alpha_q \neq 0\) for $q\leq g$ and \(0\leq \alpha_k\leq n_k\) for \(1\leq k\leq q\), then 
\[\varphi_1^*(h)=\varphi_1^*(b_{\alpha}\frakg_0^{\alpha_0} \cdots \frakg_q^{\alpha_q})+(\text{h.o.t.})= Q^h_{v_1(h)}(a^{(1)})t^{v_f(h)}+(\text{h.o.t.}), \]
where \(Q^h_{v_1(h)}(a^{(1)})\) is a polynomial in the coefficients of the Puiseux series of \(C_1.\) 
 The polynomial \(Q^h_{v_1(h)}(a^{(1)})\) can be explicitly computed and satisfy some useful properties related to the coefficients of the Puiseux series.  We refer to \cite[Sec. 1]{peraire} for further details.
\end{rem}

\begin{rem}\label{hypo} Notice that, by (\ref{equal-coeficient}), $C_1$ and $C_2$ share the same maximal contact curves. In particular, given $H\in \mathbb{C}\{x\}[y]$ with $\deg_y(H)<\beta_0$ we get $v_1(H)=v_2(H)$. Moreover, if $H\in (x,y)$ then $v_1(\textup{d}H)=v_2(\textup{d}H)=v_1(H)=v_2(H)$ so $(\alpha,\alpha)\in\Lambda_{f_1f_2}$ for every $0\neq\alpha\in S_1=S_2$.
\end{rem}

Now we are ready to provide the generalization of Proposition \ref{prop:MBirreducible}.
    
\begin{lemma}\label{lem:expBM}Let $C_1$ and $C_2$ be two equisingular plane branches with semigroup $\langle \obeta_0,\ldots ,\obeta_g\rangle$ and intersection multiplicity $I\geq n_g\obeta_g$. If \(\omega = A \, dx - B \, dy \in \mathcal{E}(f_1) \cap f_1 \cdot \Omega^1(\log\ C_1)\) with \( B (f_1)_x + A (f_1)_y = M f_1\) where $v_1(B)=\sum_{l=0}^{q}\alpha_l\obeta_l$ and $\alpha_q\neq 0$, then  we have 
\[
\varphi_i^*(M) = \frac{e_k \obeta_{k+1}}{\obeta_0} b_i \, t_i^{v_1(B)-\obeta_0} + (\text{h.o.t.}),\quad
\varphi_i^*(B) = b_i \, t_i^{v_1(B)} + (\text{h.o.t.})\ \ \ \mbox{for}\ \ \ i=1,2
\]
where \(
k = \max_{0\leq l\leq g-1}\{e_l \nmid (v_1(B) - v_1((f_1)_y)) \}.
\) Moreover, \(b_1=b_2\) if 
$I>n_g\obeta_g$ or $q<g$. If \(I=n_g\obeta_g\) and \(q=g\) then \(b_i\) depends on the coefficients \(a_l^{(i)}\) with \(l\leq \beta_g.\)  
\end{lemma}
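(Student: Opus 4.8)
\emph{Sketch of the argument.} The plan is to deduce the case $i=1$ from the irreducible statement and then transport it to $i=2$, using that $B$ and $M$ are honest polynomials in $P_{\beta_0}$ whose leading terms along either branch are governed by the \emph{common} semiroots of $C_1$ and $C_2$. First I would observe that the hypotheses are precisely those of Proposition \ref{prop:MBirreducible} for the single branch $C_1$, so that proposition yields $\varphi_1^*(B)=b_1 t_1^{v_1(B)}+(\text{h.o.t.})$ and $\varphi_1^*(M)=\frac{e_k\obeta_{k+1}}{\obeta_0}b_1\,t_1^{v_1(B)-\obeta_0}+(\text{h.o.t.})$, together with $v_1(M)=v_1(B)-\obeta_0$ and the description of $k$; here $b_1=Q^B_{v_1(B)}(a^{(1)})$ in the notation of Remark \ref{qh}.

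To reach $i=2$ I would first note that a degree count in $Mf_1=Af_y+Bf_x$ (with $A\in P_{\beta_0}$, $B\in P_{\beta_0-1}$ and $f_1$ a Weierstrass polynomial of $y$-degree $\beta_0$) gives $M\in\mathbb{C}\{x\}[y]$ with $\deg_y M<\beta_0$, i.e. $M\in P_{\beta_0}$, so $M$ admits a $G$-adic expansion in $\frakg_0,\dots,\frakg_g$. By (\ref{equal-coeficient}) and Remark \ref{hypo} these semiroots are shared by $C_1$ and $C_2$, whence $v_2(B)=v_1(B)$ and $v_2(M)=v_1(M)=v_1(B)-\obeta_0$. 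Since $\varphi_i^*(\frakg_0)=\varphi_i^*(x)=t_i^{\obeta_0}$, the identity $v_1(M)=v_1(B)-\obeta_0$ together with the uniqueness of the valuations of $G$-adic monomials (property (2) in (\ref{expansionh})) forces the leading monomial of $M$ to be $\frakg_0^{\alpha_0-1}\frakg_1^{\alpha_1}\cdots\frakg_g^{\alpha_g}$ whenever that of $B$ is $\frakg_0^{\alpha_0}\frakg_1^{\alpha_1}\cdots\frakg_g^{\alpha_g}$ (in particular $\alpha_0\geq 1$). Writing $c_l(a)$ for the leading coefficient of $\varphi^*(\frakg_l)$, the leading coefficients $Q^B_{v_1(B)}(a)$ and $Q^M_{v_1(M)}(a)$ are then each a nonzero constant times the \emph{same} product $\prod_{l\geq 1}c_l(a)^{\alpha_l}$, so their ratio is a constant independent of the branch. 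Evaluating it at $a^{(1)}$ identifies this constant with $\frac{e_k\obeta_{k+1}}{\obeta_0}$ by the first step, giving $\varphi_2^*(M)=\frac{e_k\obeta_{k+1}}{\obeta_0}b_2\,t_2^{v_1(B)-\obeta_0}+(\text{h.o.t.})$ with $b_2=Q^B_{v_1(B)}(a^{(2)})$.

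For the final assertion I would track exactly which Puiseux coefficients enter $b_i=Q^B_{v_1(B)}(a^{(i)})=b_\alpha\prod_{l=1}^{q}c_l(a^{(i)})^{\alpha_l}$, where $b_\alpha$ is the coefficient of the leading $G$-adic monomial of $B$ in (\ref{expansionh}). From the defining product (\ref{gq}) of $\frakg_l$, the factor indexed by $\epsilon=1$ contributes the tail $\sum_{j\geq\beta_l}a_j^{(i)}t_i^{j}$ to $\varphi_i^*(\frakg_l)$, so $c_l(a^{(i)})$ depends on $a_j^{(i)}$ with $j\leq\beta_l$ (linearly in $a_{\beta_l}^{(i)}$), and hence $b_i$ depends only on $a_j^{(i)}$ with $j\leq\beta_q$. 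If $q<g$ then $\beta_q<\beta_g$ and (\ref{equal-coeficient}) yields $a_j^{(1)}=a_j^{(2)}$ for all such $j$, so $b_1=b_2$; the same conclusion holds for $q=g$ and $I>n_g\obeta_g$, where (\ref{equal-coeficient}) guarantees agreement up to $j\leq\beta_g$. When $q=g$ and $I=n_g\obeta_g$, the coefficient $a_{\beta_g}$ enters $c_g$ and may differ between the branches, so $b_i$ genuinely depends on $a_l^{(i)}$ with $l\leq\beta_g$.

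The main obstacle is this last bookkeeping: pinning down precisely which coefficients of the Puiseux series occur in the leading coefficient $Q^B_{v_1(B)}$, i.e. controlling the semiroot pullbacks $c_l$ through the polynomials of Remark \ref{qh}. This is exactly where the hypothesis $I\geq n_g\obeta_g$ is used in its sharp form and where the dichotomy $I>n_g\obeta_g$ versus $I=n_g\obeta_g$ produces the stated dependence on $a_{\beta_g}$. By contrast, the reduction of $C_1$ to Proposition \ref{prop:MBirreducible} and the cancellation of the common factor $\prod_{l\geq 1}c_l^{\alpha_l}$ in the ratio $Q^M/Q^B$ are comparatively formal.
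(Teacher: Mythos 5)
Your proposal follows essentially the same route as the paper: apply Proposition \ref{prop:MBirreducible} for $i=1$, use the $G$-adic expansions and the shared semiroots to get $v_2(B)=v_1(B)$ and $v_2(M)=v_1(M)=v_1(B)-\obeta_0$, identify the ratio of leading coefficients as branch-independent because the leading monomials of $B$ and $M$ differ only by a factor of $\frakg_0$, and then track which Puiseux coefficients enter $b_i$ to settle the dichotomy on $I$ and $q$. The only (minor) difference is that where you derive the dependence of the leading coefficients on $a_j^{(i)}$ with $j\leq\beta_q$ directly from the product formula \eqref{gq}, the paper simply cites \cite[Lemma 1.7]{peraire}.
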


\begin{proof}
    From Proposition \ref{prop:MBirreducible}, we have the required expression for \(i=1\). 

Since $M, B\in\mathbb{C}\{x\}[y]$ with $\deg_y(M),\deg_y(B)<\beta_0$, by Remark \ref{hypo}, we get $v_2(B)=v_1(B)=\sum_{l=0}^q\alpha_l\obeta_l$ with $\alpha_q\neq 0$ and $v_2(M)=v_1(M)=v_1(B)-\obeta_0$. So, considering the $G$--expansion of $M$ and $B$, according to Remark \ref{qh}, for $i=1,2$ we have
\[
\varphi_i^*(M) =\varphi_i^*(b^M_{\alpha}\frakg_0^{\alpha_0-1}\frakg_1^{\alpha_1} \cdots \frakg_q^{\alpha_q})+(\text{h.o.t.})= Q^M_{v_1(M)}(a^{(i)}) t_i^{v_1(B)-\obeta_0} + (\text{h.o.t.}),\]
\[
\varphi_i^*(B) =\varphi_i^*(b^B_{\alpha}\frakg_0^{\alpha_0}\frakg_1^{\alpha_1} \cdots \frakg_q^{\alpha_q})+(\text{h.o.t.})=b_i t_i^{v_1(B)} + (\text{h.o.t.}).
\]
As $\varphi^*_i(\frakg_0)=t_i^{\obeta_0}$, independently of $i$, the coefficient of $t_i^{v_1(B)-\obeta_0}$ in $\varphi_i^*(\frakg_0^{\alpha_0-1}\frakg_1^{\alpha_1} \cdots \frakg_q^{\alpha_q})$ and the coefficient of $t_i^{v_1(B)}$ in $\varphi_i^*(\frakg_0^{\alpha_0}\frakg_1^{\alpha_1} \cdots \frakg_q^{\alpha_q})$ are the same, so \begin{equation}\label{relation-m-b}
\frac{b_i}{b^B_{\alpha}}=\frac{Q^M_{v_1(M)}(a^{(i)})}{b^M_{\alpha}},\ \ \ \mbox{that is}\ \ \ Q^M_{v_1(M)}(a^{(i)})=\frac{b^M_{\alpha}}{b^B_{\alpha}}b_i=\frac{e_k\obeta_{k+1}}{\obeta_0}b_i,
\end{equation}
where the last equality follows using $i=1$. 

By \cite[Lemma 1.7]{peraire}, $Q^M_{v_j(M)}(a^{(i)})$ and $b_i=Q^B_{v_j(B)}(a^{(i)})$ are non-zero homogeneous polynomials in the coefficients $a^{(i)}_k$ of $\varphi_i(t_i)$ with $k\leq\beta_q$. In this way, if \(q\neq g\) or \(I>n_g\obeta_g\), by (\ref{equal-coeficient}), we get
\[b_2=b_1\ \ \mbox{and}\ \ \ Q^M_{v_1(M)}(a^{(2)})=Q^M_{v_1(M)}(a^{(1)})=\frac{e_k \obeta_{k+1}}{\obeta_0} b_1,\]
that is, the coefficients are independent of the branch $C_i$. If \(q=g\) and \(I=n_g\obeta_g\) then $Q^M_{v_1(M)}(a^{(i)})$ and $Q^B_{v_1(B)}(a^{(i)})$ satisfy (\ref{relation-m-b}) and they depend on $a_{l}^{(i)}$ with $l\leq\beta_g$. 
\end{proof}

By Remark \ref{rem-inf-fiber}, given $\alpha\in\Lambda_f$ with $f=f_1f_2$, to characterize the infinite fiber \(F^{\Lambda}_{i}(\alpha)=\infty\) it is enough to characterize the values of differential forms in $f_j\cdot\Omega^1(log\ C_{j})$ with respect to $f_i$. Firstly, we will analyze the differential forms in $\mathcal{E}(f_j)\cap f_j\cdot\Omega^1(\log\ C_j)$. To do so, we will apply similar ideas to the ones in \cite[Sec. 4]{hernandessemiroots}.  

\begin{proposition}\label{prop:values+I}
Let $C_1$ and $C_2$ be two equisingular plane branches with semigroup $\langle \obeta_0,\ldots ,\obeta_g\rangle$ and intersection multiplicity $I$. Let \(\omega = A \, \textup{d}x - B \, \textup{d}y \in \mathcal{E}(f_1) \cap f_1 \cdot \Omega^1(\log\ C_1)\). If
\begin{enumerate}[a)]
     \item \(I>n_g\obeta_g\)\ \ or
     \item \(I=n_g\obeta_g\) and \(e_{g-1}\mid\;(\nu_1(B)-(c+\obeta_0-1))\)
 \end{enumerate}
 then
\begin{equation}\label{cond0}
\nu_2(\omega)=\nu_1(B)+I-(c+\obeta_0-1)=\nu_2(B)+I-(c+\obeta_0-1),
\end{equation}
where \(c\) denotes the conductor of \(\langle\obeta_0,\dots,\obeta_g\rangle.\)  In particular, $\omega\not\in f_2\cdot\Omega^1(\log\ C_2)$.
\end{proposition}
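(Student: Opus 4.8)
The plan is to start from the logarithmic relation \eqref{log-M}, applied to $f_1$ in place of $f$. Since $\omega = A\,dx - B\,dy \in \mathcal{E}(f_1)\cap f_1\cdot\Omega^1(\log\ C_1)$ satisfies $B(f_1)_x + A(f_1)_y = Mf_1$, we have the identity of holomorphic forms
\[
(f_1)_y\,\omega = -B\,df_1 + f_1 M\,dx.
\]
I would pull this back along the Newton--Puiseux parametrization $\varphi_2$ of the second branch and compare $t_2$-orders. Using that $\varphi_2^*(h\eta)=\varphi_2(h)\varphi_2^*(\eta)$ for a function $h$ and a form $\eta$, the left-hand side has order $v_2((f_1)_y)+v_2(\omega)$. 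Because $(f_1)_y\in\mathbb{C}\{x\}[y]$ has $\deg_y<\beta_0$, Remark \ref{hypo} gives $v_2((f_1)_y)=v_1((f_1)_y)$, and by \eqref{fy-cond} this equals $c+\obeta_0-1$. Thus the whole statement reduces to proving that the right-hand side has $t_2$-order exactly $v_1(B)+I$.

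The two summands on the right each turn out to have order $v_1(B)+I$, so the crux is that their leading coefficients do not cancel. For the first, I would write $\varphi_2^*(df_1)=t_2\frac{d}{dt_2}\varphi_2(f_1)$ with $\varphi_2(f_1)=d\,t_2^{I}+(\text{h.o.t.})$, $d\neq 0$ (since $v_2(f_1)=[f_1,f_2]_0=I$); combined with $\varphi_2^*(B)=b_2t_2^{v_1(B)}+(\text{h.o.t.})$ from Lemma \ref{lem:expBM}, the term $-B\,df_1$ contributes $-b_2 d I\,t_2^{v_1(B)+I}$. For the second, using $v_2(dx)=\obeta_0$ (from $x_2=t_2^{\obeta_0}$) together with the expansion $\varphi_2^*(M)=\frac{e_k\obeta_{k+1}}{\obeta_0}b_2\,t_2^{v_1(B)-\obeta_0}+(\text{h.o.t.})$ of Lemma \ref{lem:expBM}, the term $f_1 M\,dx$ contributes $d\,e_k\obeta_{k+1}b_2\,t_2^{v_1(B)+I}$. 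Hence the coefficient of $t_2^{v_1(B)+I}$ is $b_2 d\,(e_k\obeta_{k+1}-I)$, nonzero precisely when $I\neq e_k\obeta_{k+1}$.

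The main (and final) point is to rule out $I=e_k\obeta_{k+1}$. Here $0\le k\le g-1$, and since $\obeta_{k+1}>n_k\obeta_k$ one checks $e_k\obeta_{k+1}>e_{k-1}\obeta_k$, so $e_k\obeta_{k+1}$ is strictly increasing in $k$ and attains its maximum $e_{g-1}\obeta_g=n_g\obeta_g$ exactly at $k=g-1$. Under hypothesis (a), $I>n_g\obeta_g\ge e_k\obeta_{k+1}$, so the coefficient survives. Under hypothesis (b), $I=n_g\obeta_g$, and the divisibility $e_{g-1}\mid(v_1(B)-(c+\obeta_0-1))$, together with the description $k=\max\{l:e_l\nmid(v_1(B)-v_1((f_1)_y))\}$ from Proposition \ref{prop:MBirreducible}, forces $k<g-1$, whence $e_k\obeta_{k+1}<n_g\obeta_g=I$. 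In both cases $v_2((f_1)_y\,\omega)=v_1(B)+I$, so $v_2(\omega)=v_1(B)+I-(c+\obeta_0-1)$, which also equals $v_2(B)+I-(c+\obeta_0-1)$ by Remark \ref{hypo}; finiteness of $v_2(\omega)$ then yields $\omega\notin f_2\cdot\Omega^1(\log\ C_2)$ by Remark \ref{rem-inf-fiber}. I expect the leading-coefficient cancellation analysis to be the main obstacle, since it rests entirely on the precise constants supplied by Lemma \ref{lem:expBM} and on the monotonicity of $e_k\obeta_{k+1}$.
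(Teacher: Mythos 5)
Your proposal is correct and follows essentially the same route as the paper: the identity $(f_1)_y\omega = Mf_1\,\textup{d}x - B\,\textup{d}f_1$, pullback along $\varphi_2$ using the leading terms of $\varphi_2^*(M)$ and $\varphi_2^*(B)$ from Lemma \ref{lem:expBM}, and the observation that the leading coefficient $(e_k\obeta_{k+1}-I)ab_2$ is nonzero under either hypothesis. Your explicit justification that $e_k\obeta_{k+1}$ is strictly increasing in $k$ (so that $I>n_g\obeta_g$ rules out all cancellations, and condition (b) forces $k<g-1$) is exactly the point the paper uses, just spelled out in slightly more detail.
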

\begin{proof}
Since  \(\omega = A \, \textup{d}x - B \, \textup{d}y \in \mathcal{E}(f_1) \cap f_1 \cdot \Omega(\log\ C_1)\) there exists $M\in\mathbb{C}\{x\}[y]$ satisfying $B(f_1)_x+A(f_1)_y=Mf_1$ and, by (\ref{log-M}), \begin{equation}\label{fomega}
 (f_1)_y\omega=Mf_1\textup{d}x-B\textup{d}f_1.
\end{equation}

According to Proposition \ref{prop:MBirreducible} and Lemma \ref{lem:expBM}, we have

\[
\varphi_2^*(M)= \frac{e_k \obeta_{k+1}}{\obeta_0} b_2 \, t_2^{v_1(B)-\obeta_0} + (\text{h.o.t.}),\quad
\varphi_2^*(B) = b_2 \, t_2^{v_1(B)} + (\text{h.o.t.}),
\]
where \(
k = \max_{0\leq i\leq g-1}\{e_i \nmid (v_1(B) - v_1((f_1)_y)) \}\). In addition, we get
$$\varphi_2^*(f_1)=at_2^I+(\text{h.o.t.}).$$
Thus,
\begin{equation}\label{eqn:proofvalue+I}
    \begin{split}
        \varphi_2^*((f_1)_y\omega)&=\varphi_2^*(Mf_1\textup{d}x-B\textup{d}f_1)\\
        &=(ae_k\obeta_{k+1}b_2t_2^{v_1(B)+I-1}+\text{h.o.t.})(aIb_2t_2^{v_1(B)+I-1}+\text{h.o.t.})\\
        &=(e_k\obeta_{k+1}-I)ab_2t_2^{v_1(B)+I-1}+(\text{h.o.t.}).
    \end{split}
\end{equation}

If \(I>n_g\obeta_g=e_{g-1}\obeta_g\) then for all \(0\leq k\leq g-1\) we have \(I\neq e_{k}\obeta_{k+1}.\) If \(I=n_g\obeta_g\) and \(e_{g-1}\mid\;(\nu_1(B)-\nu_1((f_1)_y)\) then  \(k<g-1\) and thus \(I=n_g\obeta_g=e_{g-1}\obeta_g\neq e_k\obeta_{k+1}.\) So, for the condition $a)$ or $b)$ we get
$v_2((f_1)_y\omega)=v_1(B)+I$.

Since we have $B,(f_1)_y\in\mathbb{C}\{x\}[y]$ 
with $\deg_y(B),\deg_y((f_1)_y)<\beta_0$, it follows by Remark \ref{hypo} and (\ref{fy-cond}) that

$v_2(B)=v_1(B)$ and $v_2((f_1)_y)=v_1((f_1)_y)=c+\obeta_0-1$. So, \(v_2(\omega_2)=v_1(B)+I-(c-\obeta_0-1)=v_2(B)+I-(c-\obeta_0-1).\)
\end{proof}
\medskip

Let us now focus on the case of a reduced plane curve singularity with two equisingular branches and \(I=[f_1,f_2]_0>n_g\obeta_g.\) Recall that given $\omega=A\textup{d}x-B\textup{d}y\in \Omega^1$, its {\it Weierstrass 1-form} (see \cite{GHI23}) of $\omega$ with respect to $f_1$ is given as 
\begin{equation}\label{exp}
	\omega=\omega_0+\omega_1
\end{equation}
with $\omega_0=Pf_1\textup{d}x-Q\textup{d}f_1\in \mathcal{G}(f_1)$ and $\omega_1=A_1\textup{d}x-B_1\textup{d}y\in\mathcal{E}(f_1)$.

\begin{rem}\label{remark1}
	For \(i,j\in\{1,2\},\) \(i\neq j,\) given $\omega\in\mathcal{G}(f_j)=\mathbb{C}\{x,y\}f_j\textup{d}x+\mathbb{C}\{x,y\}\textup{d}f_j\subset f_j\cdot\Omega^1(\log\ C_j)$ we get 
	$v_i(\omega)\geq I=[f_i,f_j]_0=v_i(f_j)=v_i(\textup{d}f_j)$. In addition we have that    
	$$I+S_i\subset v_i(\mathcal{G}(f_j))\subset v_i(f_j\cdot\Omega^1(\log\ C_j)).$$
	In fact, given $\gamma\in S_i$ we consider $h\in\mathbb{C}\{x,y\}$ such that $v_i(h)=\gamma$. Since $h\textup{d}f_j\in\mathcal{G}(f_j)\subset f_j\cdot\Omega^1(\log\ C_j)$, we get $v_i(\textup{h}df_j)=\gamma+I\in v_i(\mathcal{G}(f_j))\subset v_i(f_j\cdot\Omega^1(\log\ C_j))$.
\end{rem}

Notice that Remark \ref{remark1} implies that in order to compute the infinite fibers for which we need to work a bit we only need to look at those \(\omega\in f_j\cdot\Omega^1(\log\ C_j)\) for which \(v_i(\omega)\leq I+c.\)

\begin{rem}\label{remark2}
Given $\omega=A\textup{d}x-B\textup{d}y\in f_1\cdot\Omega^1(\log\ C_1)$ we consider its Weierstrass 1-form with respect to $f_1$, that is $\omega=\omega_0+\omega_1$ with $\omega_0\in\mathcal{G}(f_1)$ and $\omega_1\in\mathcal{E}(f_1)$. Since
$res(\omega)=res(\omega_0)+res(\omega_1),$ we have $v_1(res(\omega))\geq\min\{v_1(res(\omega_0)),v_1(res(\omega_1))\}$. In this way if $v_1(res(\omega))<0$ then, since $v_1(res(\omega_0))\geq 0$ by construction, we get $v_1(res(\omega))=v_1(res(\omega_1))$.
\end{rem}

\begin{lemma}\label{lemma1}
Suppose that $I>n_gv_g$. Given $\omega=\omega_0+\omega_1\in f_1\cdot\Omega^1(\log\ C_1)$ as before. 

If $v_1(res(\omega))<0$ then
$v_2(\omega)=v_2(\omega_1)=I+v_2(res(\omega))=I+v_2(res(\omega_1))=I+v_1(res(\omega_1)).$	
\end{lemma}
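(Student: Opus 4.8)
The plan is to exploit the Weierstrass decomposition $\omega = \omega_0 + \omega_1$ together with the sign hypothesis $v_1(\mathrm{res}(\omega)) < 0$ to show that, with respect to $v_2$, the component $\omega_1 \in \mathcal{E}(f_1) \cap f_1 \cdot \Omega^1(\log\ C_1)$ strictly dominates the component $\omega_0 \in \mathcal{G}(f_1)$. Then every valuation appearing in the statement is controlled by $\omega_1$, to which Proposition \ref{prop:values+I} (case (a), valid since $I > n_g\obeta_g$) applies directly. Throughout I read the residue through the explicit formula $\mathrm{res}(A\,\textup{d}x - B\,\textup{d}y) = \overline{B}/\overline{(f_1)_y} \in \mathcal{K}$, so that $v_i(\mathrm{res}(\omega)) = v_i(B) - v_i((f_1)_y)$ for $i = 1,2$; in particular $\mathrm{res}$ is additive in $\omega$, and $\mathrm{res}(\omega) = \mathrm{res}(\omega_0) + \mathrm{res}(\omega_1)$.

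First I would transport the sign hypothesis from $C_1$ to $C_2$. Writing $\omega_1 = A_1\,\textup{d}x - B_1\,\textup{d}y$ with $B_1 \in P_{\beta_0-1}$ and noting that $(f_1)_y \in \mathbb{C}\{x\}[y]$ has $\deg_y < \beta_0$, Remark \ref{hypo} gives $v_2(B_1) = v_1(B_1)$ and $v_2((f_1)_y) = v_1((f_1)_y) = c + \obeta_0 - 1$, whence $v_2(\mathrm{res}(\omega_1)) = v_1(\mathrm{res}(\omega_1))$. By Remark \ref{remark2}, since $v_1(\mathrm{res}(\omega)) < 0$ while $v_1(\mathrm{res}(\omega_0)) \geq 0$, we have $v_1(\mathrm{res}(\omega_1)) = v_1(\mathrm{res}(\omega)) < 0$; and the same bound $v_2(\mathrm{res}(\omega_0)) \geq 0$ holds because $\mathrm{res}(\omega_0) = \overline{Q} \in \mathcal{O}$ for $\omega_0 = Pf_1\,\textup{d}x - Q\,\textup{d}f_1 \in \mathcal{G}(f_1)$. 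Combining, $v_2(\mathrm{res}(\omega_1)) = v_1(\mathrm{res}(\omega_1)) < 0 \leq v_2(\mathrm{res}(\omega_0))$, so the valuation inequality $v_2(a+b) \geq \min\{v_2(a),v_2(b)\}$, which is an equality when the two valuations differ, yields $v_2(\mathrm{res}(\omega)) = v_2(\mathrm{res}(\omega_1))$.

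Next I would compute $v_2(\omega_1)$. As $\omega_0 \in \mathcal{G}(f_1) \subset f_1 \cdot \Omega^1(\log\ C_1)$, also $\omega_1 = \omega - \omega_0 \in \mathcal{E}(f_1) \cap f_1 \cdot \Omega^1(\log\ C_1)$, so Proposition \ref{prop:values+I} gives $v_2(\omega_1) = v_2(B_1) + I - (c + \obeta_0 - 1)$. Substituting $c + \obeta_0 - 1 = v_2((f_1)_y)$ rewrites this as $v_2(\omega_1) = I + (v_2(B_1) - v_2((f_1)_y)) = I + v_2(\mathrm{res}(\omega_1))$, and since $v_2(\mathrm{res}(\omega_1)) < 0$ this forces $v_2(\omega_1) < I$. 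Finally, by Remark \ref{remark1} we have $v_2(\omega_0) \geq I > v_2(\omega_1)$, so the same valuation inequality gives $v_2(\omega) = v_2(\omega_1)$. Chaining the four identities $v_2(\omega) = v_2(\omega_1)$, $v_2(\omega_1) = I + v_2(\mathrm{res}(\omega_1))$, $v_2(\mathrm{res}(\omega_1)) = v_2(\mathrm{res}(\omega))$, and $v_2(\mathrm{res}(\omega_1)) = v_1(\mathrm{res}(\omega_1))$ produces the asserted string of equalities.

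The substantive input is Proposition \ref{prop:values+I}; granting it, the lemma is an assembly of valuation comparisons. The only point requiring care is that each comparison be \emph{strict}, so that the inequalities collapse to equalities: this is precisely what the hypothesis $v_1(\mathrm{res}(\omega)) < 0$ guarantees, once it is transported to $C_2$ via Remark \ref{hypo}. I expect no genuine obstacle beyond keeping track of the two distinct roles of $\omega_0$, whose $v_2$-contribution is $\geq I$ as a form but $\geq 0$ as a residue.
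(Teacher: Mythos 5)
Your proof is correct and follows essentially the same route as the paper's: decompose $\omega=\omega_0+\omega_1$, use Remark \ref{remark2} and Remark \ref{hypo} to transport the residue valuation to $C_2$, apply Proposition \ref{prop:values+I} to get $v_2(\omega_1)=I+v_2(res(\omega_1))<I$, and conclude via $v_2(\omega_0)\geq I$ from Remark \ref{remark1}. If anything, you are slightly more careful than the paper in explicitly justifying $v_2(res(\omega))=v_2(res(\omega_1))$ via $v_2(res(\omega_0))\geq 0$, a step the paper leaves implicit in its final chain of equalities.
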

\begin{proof}
Given $\omega=A\textup{d}x-B\textup{d}y\in f_1\cdot\Omega^1(\log C_1)$ we consider its Weierstrass 1-form with respect to $f_1$, that is $\omega=\omega_0+\omega_1$ with $\omega_0\in\mathcal{G}(f_1)$ and $\omega_1:=A_1\textup{d}x-B_1\textup{d}y\in\mathcal{E}(f_1)$. By Remark \ref{remark2} we get $v_1(res(\omega))=v_1(res(\omega_1))$ where $res(\omega_1)=B_1/(f_1)_y$.

By hypothesis $I>n_gv_g,$ then since $deg_y(B_1), deg_y((f_1)_y)<\obeta_0$, by Remark \ref{hypo}, it follows that $$v_2(res(\omega_1))=v_2(B_1)-v_2((f_1)_y)=v_1(B_1)-v_1((f_1)_y)=v_1(res(\omega_1))$$ and by Proposition \ref{prop:values+I}, we get 
$$v_2(\omega_1)=I+v_2(res(\omega_1))=I+v_1(res(\omega))<I.$$

On the other hand, $v_2(\omega)\geq\min\{v_2(\omega_0),v_2(\omega_1)\}$. By Remark \ref{remark1}, $v_2(\omega_0)\geq I$ and as $v_2(\omega_1)=I+v_2(res(\omega_1))<I$ it follows that

$$v_2(\omega)=v_2(\omega_1)=I+v_2(res(\omega))=I+v_2(res(\omega_1))=I+v_1(res(\omega_1)).$$
\end{proof}

In addition, we have the following

\begin{lemma}\label{lemma2}
	Suppose that $I>n_gv_g$, then $\con (v_2(f_1\cdot\Omega^1(\log\ C_1)))\leq I-\obeta_0+1$.
\end{lemma}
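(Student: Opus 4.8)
The plan is to prove the statement in the form it is really used: I will show directly that the value set $V:=v_2(f_1\cdot\Omega^1(\log\ C_1))\subset\mathbb{Z}$ contains every integer $N\geq I-\obeta_0+1$, which is exactly the assertion $\con(V)\leq I-\obeta_0+1$. The standing hypotheses are that $C_1,C_2$ are equisingular with $S_1=S_2=\langle\obeta_0,\ldots,\obeta_g\rangle$ and that $I>n_g\obeta_g$, so that Proposition \ref{prop:values+I} applies to \emph{every} form in $\mathcal{E}(f_1)\cap f_1\cdot\Omega^1(\log\ C_1)$. I also use that, in the irreducible case, the residue value set $\Delta_{f_1}$ has conductor $-\obeta_0+1$ (Pol). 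Writing $N=I+m$, the condition $N\geq I-\obeta_0+1$ becomes $m\geq -\obeta_0+1=\con(\Delta_{f_1})$, and hence $m\in\Delta_{f_1}$ in all cases.

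The argument then splits according to whether $m$ lies in the semigroup. If $m\in S_2$, then $N=I+m\in I+S_2\subseteq V$ by Remark \ref{remark1}, and there is nothing more to do. If $m\notin S_2$, the plan is to produce a form $\omega_1\in\mathcal{E}(f_1)\cap f_1\cdot\Omega^1(\log\ C_1)$ whose residue has value $v_1(res(\omega_1))=m$: once such an $\omega_1$ is available, Proposition \ref{prop:values+I} gives $v_2(\omega_1)=v_1(B_1)+I-(c+\obeta_0-1)=I+v_1(res(\omega_1))=I+m=N$, so $N\in V$. Combining the two cases yields $V\supseteq I+\{m:m\geq -\obeta_0+1\}=\{N:N\geq I-\obeta_0+1\}$, which is the bound. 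I note that the purely negative values of $m$ are already handled by Lemma \ref{lemma1}, applied to any $\omega\in f_1\cdot\Omega^1(\log\ C_1)$ with $v_1(res(\omega))=m<0$; thus the real content is the range $0\leq m<\con(\langle\obeta_0,\ldots,\obeta_g\rangle)$, i.e. filling in $I+g$ for the gaps $g$ of $S_1$, which is precisely where $I+S_2$ and the negative residues fall short.

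The main obstacle is therefore the realization step: every residue value $m\in\Delta_{f_1}\setminus S_1$ (with $m\geq -\obeta_0+1$) must be attained by an $\mathcal{E}$-form, equivalently $S_1\cup v_1(\mathcal{R}^{\mathcal{E}})=\{m:m\geq -\obeta_0+1\}$, where $\mathcal{R}^{\mathcal{E}}$ denotes the residues of forms in $\mathcal{E}(f_1)\cap f_1\cdot\Omega^1(\log\ C_1)$. To set this up I would use the decomposition $\Omega^1=\mathcal{E}(f_1)\oplus\mathcal{G}(f_1)$: writing $\omega=\omega_0+\omega_1$ with $\omega_0\in\mathcal{G}(f_1)$ and $\omega_1\in\mathcal{E}(f_1)$, additivity of the residue gives $res(\omega)=res(\omega_0)+res(\omega_1)$, and since $\omega_0=Pf_1\textup{d}x-Q\textup{d}f_1$ has $res(\omega_0)=-\overline{Q}\in\mathcal{O}_1$, the residues coming from $\mathcal{G}(f_1)$ have values exactly $S_1$. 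Hence $\mathcal{R}_{C_1}=\mathcal{O}_1+\mathcal{R}^{\mathcal{E}}$, and the values of $\Delta_{f_1}$ outside $S_1$ can only be supplied by the $\mathcal{E}$-part. The delicate point, and the step I expect to be hardest, is that a naive decomposition of an element of $\mathcal{R}_{C_1}$ of value $m$ may cancel against the $\mathcal{G}$-part (when $v_1(res(\omega_0))<m$), so one cannot simply read off $v_1(res(\omega_1))=m$; this is exactly where the explicit $G$-adic/semiroot description of the $y$-component $B_1$ of an $\mathcal{E}$-logarithmic form (Remark \ref{qh} together with Proposition \ref{prop:MBirreducible}, as developed in \cite[Sec. 4]{hernandessemiroots}) must be invoked, to exhibit an honest form with $v_1(B_1)=m+\con(\langle\obeta_0,\ldots,\obeta_g\rangle)+\obeta_0-1$ and therefore residue value $m$. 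Granting this realization, the proof concludes as in the second paragraph.
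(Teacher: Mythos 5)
Your reduction of the statement to realizing every value $N=I+m$ with $m\geq-\obeta_0+1$ is the right starting point, and the cases $m\in S_2$ (via Remark \ref{remark1}) and $m<0$ (via $\con(\Delta_{f_1})=-\obeta_0+1$ and Lemma \ref{lemma1}) are handled correctly. But the case you yourself identify as the hard one --- $m$ a nonnegative gap of $S_1$, where you must exhibit a form in $\mathcal{E}(f_1)\cap f_1\cdot\Omega^1(\log\ C_1)$ whose residue has value exactly $m$ --- is left as an unproved assertion (``granting this realization''). This is a genuine gap, not a routine verification: as you note, membership $m\in\Delta_{f_1}$ only gives some logarithmic $\omega$ with $v_1(res(\omega))=m$, and for $m\geq 0$ the $\mathcal{G}(f_1)$-component of its Weierstrass decomposition can interfere, so you cannot conclude $v_1(res(\omega_1))=m$ for the $\mathcal{E}$-part and hence cannot invoke Proposition \ref{prop:values+I}. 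Carrying out that realization would require a separate analysis of the $G$-adic structure of the $y$-components of logarithmic $\mathcal{E}$-forms, which you sketch but do not supply.

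The paper's proof shows this entire step is unnecessary. The key observation you are missing is that the Saito module is closed under multiplication by $x^n$, which adds $n\obeta_0$ to $v_2$; hence it suffices to hit one value in each congruence class modulo $\obeta_0$, namely $I-k$ for $0\leq k\leq\obeta_0-1$, since $\bigcup_{k=0}^{\obeta_0-1}(I-k+\obeta_0\mathbb{N})=I-\obeta_0+1+\mathbb{N}$. The value $I$ comes from $\textup{d}f_1\in\mathcal{G}(f_1)$, and for $0<k\leq\obeta_0-1$ the value $I-k$ comes from Lemma \ref{lemma1} applied to any $\omega$ with $v_1(res(\omega))=-k$, which exists because $-k\geq-\obeta_0+1=\con(\Delta_{f_1})$; these are exactly the \emph{negative} residue values, where Lemma \ref{lemma1} applies to the full form without any $\mathcal{E}$/$\mathcal{G}$ cancellation issue. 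So only residues in the range $-\obeta_0+1\leq m<0$ ever need to be realized, and the nonnegative gaps of $S_1$ are reached by multiplying these forms (or $\textup{d}f_1$) by powers of $x$. You should restructure your argument around this trick rather than attempting the realization step.
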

\begin{proof}
It is sufficient to show that for any $k\in\mathbb{N}$ with $0\leq k\leq \obeta_0-1$ we have 
\[I-k+\obeta_0\mathbb{N}\subset v_2(f_1\cdot\Omega^1(\log\ C_1)).\]

For $k=0$ if we take $n\in\mathbb{N}$ then $x^n\textup{d}f_1\in \mathcal{G}(f_1)\subset f_1\cdot\Omega^1 (\log\ C_1)$ and 
\[I+n\obeta_0=v_2(x^n\textup{d}f_1)\in v_2(f_1\cdot\Omega^1 (\log\ C_1)).\]

Now consider $k\in\mathbb{N}$ such that $0<k\leq\obeta_0-1$. By \cite[Prop. 3.21]{PolLogarithmic}, we have that $\con(\Delta_{f_1})=-\obeta_0+1$. Thus, we get $-k\in v_1(res(f_1\cdot\Omega^1(\log\ C_1)))$ so, there exists $\omega\in f_1\cdot\Omega^1(\log\ C_1)$ such that $v_1(res(\omega))=-k<0$. In this way, by Lemma \ref{lemma1}, $v_2(\omega)=I+v_1(res(\omega))=I-k$. Taking any $n\in\mathbb{N}$ we get
$$I-k+n\obeta_0=v_2(x^n\omega)\in v_2(f_1\cdot\Omega^1(\log\ C_1)).$$

Hence 
$$I-\obeta_0+1+\mathbb{N}=\bigcup_{k=0}^{\obeta_0-1}(I-k+\obeta_0\mathbb{N})\subseteq v_2(f_1\cdot\Omega^1(\log\ C_1))$$ and consequently, $\con (v_2(f_1\cdot\Omega^1(\log\ C_1)))\leq I-\obeta_0+1.$
\end{proof}

Now we are able to describe the infinite fibers of $\Lambda_f$ where $f=f_1f_2$ and $I>n_gv_g$.

\begin{theorem}\label{thm:fibers+I}
	Suppose that $I>n_gv_g$, then    
	$$v_2(f_1\cdot\Omega^1(\log\ C_1))=I+\Delta_{f_1}\ \ \mbox{and}\ \ \con(v_2(f_1\cdot\Omega^1(\log\ C_1)))=I-\obeta_0+1.$$
\end{theorem}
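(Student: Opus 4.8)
The plan is to prove the two claimed identities
\[
v_2(f_1\cdot\Omega^1(\log\ C_1))=I+\Delta_{f_1}
\qquad\text{and}\qquad
\con(v_2(f_1\cdot\Omega^1(\log\ C_1)))=I-\obeta_0+1,
\]
with the second being an immediate consequence of the first once we recall from Pol that $\con(\Delta_{f_1})=-\obeta_0+1$. So the real content is the set equality, which I would attack by a two-sided inclusion.

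First I would establish the inclusion $I+\Delta_{f_1}\subseteq v_2(f_1\cdot\Omega^1(\log\ C_1))$. Take $\alpha\in\Delta_{f_1}$; by definition of $\Delta_{f_1}$ there is $\omega\in f_1\cdot\Omega^1(\log\ C_1)$ with $v_1(res(\omega))=\alpha$. The strategy splits according to the sign of $\alpha$. When $\alpha<0$, Lemma \ref{lemma1} applies directly and yields $v_2(\omega)=I+v_1(res(\omega))=I+\alpha$, giving the desired value. When $\alpha\geq 0$, I would instead use Remark \ref{remark1}: since $\alpha\in\Delta_{f_1}\subseteq S_1=S_2$ (residues with nonnegative value land in the semigroup), the relation $I+S_i\subset v_i(\mathcal{G}(f_j))\subset v_i(f_j\cdot\Omega^1(\log\ C_j))$ shows that $I+\alpha$ is already realized by an element of $\mathcal{G}(f_1)$, hence of $f_1\cdot\Omega^1(\log\ C_1)$. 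Between these two cases every element of $I+\Delta_{f_1}$ is accounted for.

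For the reverse inclusion $v_2(f_1\cdot\Omega^1(\log\ C_1))\subseteq I+\Delta_{f_1}$, take any $\omega\in f_1\cdot\Omega^1(\log\ C_1)$ and decompose it into its Weierstrass 1-form $\omega=\omega_0+\omega_1$ with $\omega_0\in\mathcal{G}(f_1)$ and $\omega_1\in\mathcal{E}(f_1)$, as in Remark \ref{remark2}. Here I again separate by the sign of $v_1(res(\omega))$. If $v_1(res(\omega))<0$, Lemma \ref{lemma1} gives exactly $v_2(\omega)=I+v_1(res(\omega))\in I+\Delta_{f_1}$. If $v_1(res(\omega))\geq 0$, then by Remark \ref{remark1} we have $v_2(\omega_0)\geq I$, and I would argue that $v_2(\omega_1)\geq I$ as well: by Proposition \ref{prop:values+I} (case $I>n_g\obeta_g$) we have $v_2(\omega_1)=I+v_1(res(\omega_1))$, and since $v_1(res(\omega_0))\geq0$ the decomposition forces $v_1(res(\omega_1))\geq 0$, so $v_2(\omega_1)\geq I$; thus $v_2(\omega)\geq I=I+0\in I+\Delta_{f_1}$, using $0\in\Delta_{f_1}$ together with the conductor bound from Lemma \ref{lemma2} to place the tail $I-\obeta_0+1+\mathbb{N}$ correctly inside $I+\Delta_{f_1}$.

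The main obstacle I anticipate is the bookkeeping in the nonnegative-residue case, where the naive value estimate only gives an inequality $v_2(\omega)\geq I$ rather than an exact value, so one must verify that every value at or above $I$ that actually occurs lies in the shifted set $I+\Delta_{f_1}$. This is precisely where Lemma \ref{lemma2} is essential: it pins down $\con(v_2(f_1\cdot\Omega^1(\log\ C_1)))=I-\obeta_0+1$, and combined with $\con(\Delta_{f_1})=-\obeta_0+1$ it shows the two sets share the same conductor. Since they agree below the conductor by the explicit realizations above and both are stabilized by $+\mathbb{N}$ beyond it, the conductor computation closes the gap and forces the full equality $v_2(f_1\cdot\Omega^1(\log\ C_1))=I+\Delta_{f_1}$, from which the second assertion follows at once.
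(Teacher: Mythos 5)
Your overall architecture is the same as the paper's: two inclusions, the Weierstrass $1$-form decomposition $\omega=\omega_0+\omega_1$, Lemma \ref{lemma1} for negative residue values, Proposition \ref{prop:values+I} for the $\mathcal{E}(f_1)$-part, and Lemma \ref{lemma2} for the tail. The reverse inclusion $v_2(f_1\cdot\Omega^1(\log\ C_1))\subseteq I+\Delta_{f_1}$ as you argue it is fine. However, in the forward inclusion your case $\alpha\geq 0$ rests on a false assertion: that $\Delta_{f_1}\cap\mathbb{N}\subseteq S_1$, i.e.\ that ``residues with nonnegative value land in the semigroup.'' By relation \eqref{relation}, $\alpha\in\Delta_{f_1}$ if and only if $-\alpha\notin\Lambda_{f_1}$; since every element of $\Lambda_{f_1}$ is at least $\obeta_0>0$, \emph{every} nonnegative integer belongs to $\Delta_{f_1}$. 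So $\Delta_{f_1}\cap\mathbb{N}=\mathbb{N}$, which is strictly larger than $S_1$ whenever the branch is singular, and the realization of $I+\alpha$ via $I+S_2\subseteq v_2(\mathcal{G}(f_1))$ from Remark \ref{remark1} misses all the gaps of $S_1$. As written, that step does not produce an $\omega$ with $v_2(\omega)=I+\alpha$ for, say, $\alpha=1$.

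The repair is exactly what you gesture at in your closing paragraph, and it is what the paper does: split $\Delta_{f_1}$ not at $0$ but at the conductor $-\obeta_0+1$. For $\delta\in\Delta_{f_1}$ with $\delta\geq-\obeta_0+1$ (which includes all $\delta\geq0$), Lemma \ref{lemma2} already gives $I-\obeta_0+1+\mathbb{N}\subseteq v_2(f_1\cdot\Omega^1(\log\ C_1))$, so $I+\delta$ is realized with no reference to $S_1$ at all; for $\delta<-\obeta_0+1$ one has $\delta<0$ and Lemma \ref{lemma1} applies. With this split the semigroup detour becomes unnecessary and the proof closes. One further small overstatement: Lemma \ref{lemma2} only gives the inequality $\con(v_2(f_1\cdot\Omega^1(\log\ C_1)))\leq I-\obeta_0+1$, not the exact value; the equality of conductors is a \emph{consequence} of the set identity together with $-\obeta_0\notin\Delta_{f_1}$ (equivalently $\obeta_0\in\Lambda_{f_1}$), so it should be deduced at the end rather than invoked as an input.
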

\begin{proof}
Let $\omega\in f_1\cdot\Omega^1(\log\ C_1)$ and $\omega=\omega_0+\omega_1$ its Weierstrass 1-form with respect to $f_1$, that is $\omega_0\in \mathcal{G}(f_1)\cap f_1\cdot\Omega^1(\log\ C_1)$ and $\omega_1=A_1\textup{d}x-B_1\textup{d}y\in\mathcal{E}(f_1)\cap f_1\cdot\Omega^1(\log\ C_1)$.
\medskip

We have that $v_2(\omega)\geq\min\{v_2(\omega_0), v_2(\omega_1)\}$.
\medskip

If $v_2(\omega)\geq v_2(\omega_0)$, then by Remark \ref{remark1} we get $v_2(\omega)\geq I$ and $v_2(\omega)\in I+\Delta_{f_1}$ because $\con(\Delta_{f_1})=-\obeta_0+1$. If $v_2(\omega)=v_2(\omega_1)$, then by Proposition \ref{prop:values+I} we get $v_2(\omega_1)=I+v_1(B_1)-v_1((f_1)_y)\in I+\Delta_{f_1}$. Hence, $v_2(f_1\cdot\Omega^1(\log\ C_1))\subseteq I+\Delta_{f_1}$.
\medskip

On the other hand, let us consider $I+\delta\in I+\Delta_{f_1}$, that is, $\delta\in\Delta_{f_1}$.
\medskip

If $\delta\geq -\obeta_0+1$ then, by Lemma \ref{lemma2}, there exists $\omega\in f_1\cdot\Omega^1(\log\ C_1)$ such that $v_2(\omega)=I+\delta$.

If $-\obeta_0+1>\delta\in\Delta_{f_1}$ then there exists $\omega\in f_1\cdot\Omega^1(\log\ C_1)$ such that $\delta=v_1(res(\omega))<0$. So, by Lemma \ref{lemma1}, 
we get
$$v_2(\omega)=v_2(\omega_1)=I+v_2(res(\omega_1))=I+v_1(res(\omega_1))=I+\delta.$$
In this way, $I+\Delta_{f_1}\subseteq v_2(f_1\cdot\Omega^1(\log\ C_1))$. This conclude that $v_2(f_1\cdot\Omega^1(\log\ C_1))= I+\Delta_{f_1}$ and, since $\con(\Delta_{f_1})=-\obeta_0+1$, it follows that $\con(v_2(f_1\cdot\Omega^1(\log\ C_1)))=I-\obeta_0+1$.
\end{proof}

As a consequence we obtain
\begin{theorem}\label{thm:conductorkahler1}
     Let \(f=f_1f_2\) such that \(C_1,C_2\) are equisingular with values semigroup $\langle\obeta_0,\ldots ,\obeta_g\rangle$, \(I> n_g\obeta_g\) and \(\Lambda\) its value set of K\"ahler differentials. Then,
\begin{equation}\label{conductorkahler1}
    \con(\Lambda)=
       (I-\obeta_0+1,I-\obeta_0+1)  
    \end{equation}
    In particular, \(
    \con(\Lambda)\) is independent of the analytic type of each of the branches.
\end{theorem}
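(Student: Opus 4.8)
The plan is to compute the conductor of $\Lambda = \Lambda_{f_1 f_2}$ coordinate by coordinate, exploiting the symmetry between the two branches together with the infinite-fiber description from Theorem \ref{thm:fibers+I}. Recall that $\con(\Lambda)$ is the minimal element $\gamma \in \Lambda$ such that $\gamma + \mathbb{N}^2 \subseteq \Lambda$. By the symmetry of the hypotheses (both branches share the semigroup $\langle \obeta_0,\ldots,\obeta_g\rangle$ and play interchangeable roles in $f = f_1 f_2$), it suffices to show that the first coordinate of $\con(\Lambda)$ equals $I - \obeta_0 + 1$, and the second follows identically by swapping the indices $1 \leftrightarrow 2$.

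First I would translate the conductor condition into the language of fibers. An element $\alpha = (\alpha_1, \alpha_2)$ lies in the conductor region precisely when the fibers over sufficiently large first and second coordinates are all present in $\Lambda$; by Definition \ref{inf-fiber} and Remark \ref{rem-inf-fiber}, the presence of these infinite fibers is governed by the values $v_i$ of forms in $f_j \cdot \Omega^1(\log\ C_j)$. Concretely, the values of $\Lambda$ with an infinite fiber in the second coordinate are controlled by $v_2(f_1 \cdot \Omega^1(\log\ C_1))$, for which Theorem \ref{thm:fibers+I} gives the exact description $v_2(f_1 \cdot \Omega^1(\log\ C_1)) = I + \Delta_{f_1}$ with conductor $I - \obeta_0 + 1$. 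The key step is then to argue that the first coordinate of $\con(\Lambda)$ is dictated exactly by the onset of these infinite fibers, i.e. by $\con\big(v_2(f_1 \cdot \Omega^1(\log\ C_1))\big) = I - \obeta_0 + 1$, rather than by the semigroup conductor $c = \con(S_1)$.

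The main point to verify is that this threshold $I - \obeta_0 + 1$ genuinely governs $\con(\Lambda)$, which requires a comparison with the alternative: the contribution of forms living on the diagonal. By Remark \ref{hypo}, we have $(\alpha,\alpha) \in \Lambda$ for every nonzero $\alpha \in S_1 = S_2$, so the diagonal part of $\Lambda$ stabilizes at $c = \con(S_1)$. The decisive observation is that the hypothesis $I > n_g \obeta_g$ forces $I - \obeta_0 + 1 > c$: indeed $c = \con(S_1) \le n_g \obeta_g - \obeta_0 + 1 < I - \obeta_0 + 1$ using the formula \eqref{fy-cond} for the conductor together with $\con(S_1) = \sum_{i=1}^g (n_i - 1)\obeta_i - \obeta_0 + 1 \le n_g \obeta_g - \obeta_0$. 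Hence the binding constraint for membership of $\gamma + \mathbb{N}^2$ in $\Lambda$ is the infinite-fiber threshold, not the diagonal threshold, and so $\pr_1(\con(\Lambda)) = I - \obeta_0 + 1$.

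I expect the main obstacle to be the bookkeeping that certifies $I - \obeta_0 + 1$ is not merely an upper bound but exactly the conductor in each coordinate: one must confirm both that every $\gamma \ge (I-\obeta_0+1, I-\obeta_0+1)$ lies in $\Lambda$ (combining the infinite fibers from Theorem \ref{thm:fibers+I} on both sides with the diagonal elements of Remark \ref{hypo}, invoking property (2) of the semiring to fill in off-diagonal points), and that no strictly smaller vector works, for which one exhibits a missing point just below the threshold using that $\con(\Delta_{f_1}) = -\obeta_0 + 1$ is sharp. Making the minimality argument clean — ruling out that some mixed form produces the required fibers at a lower coordinate — is where the care is needed.
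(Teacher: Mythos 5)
Your proposal is correct and follows essentially the same route as the paper's proof: the upper bound comes from combining the diagonal elements $(\delta,\delta)\in\Lambda$ for $\delta\geq I-\obeta_0>c$ with the infinite fibers supplied by Theorem \ref{thm:fibers+I}, and the sharpness comes from the fact that $\con(\Delta_{f_1})=-\obeta_0+1$ is attained, so the fiber of $\Lambda$ at level $I-\obeta_0$ is not infinite (the paper packages this last step as showing that $(I-\obeta_0,I-\obeta_0)$ is an absolute maximal of $\Lambda$, but the underlying mechanism is the one you describe). The "mixed forms" worry you raise at the end is already disposed of by the exact equality $v_2(f_1\cdot\Omega^1(\log\ C_1))=I+\Delta_{f_1}$ in Theorem \ref{thm:fibers+I}, which accounts for all logarithmic forms via their Weierstrass decomposition.
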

\begin{proof}
    Since $I>n_g\obeta_g$ it follows by (\ref{eq:definbetabarra}) that $I-\obeta_0>\con(S_1)$, by (\ref{fy-cond}) and Remark \ref{hypo} we get $(\delta,\delta)\in\Lambda$ for any $\delta\geq I-\obeta_0$. In addition, we have that \(\obeta_0=\min\Lambda_1=\min\Lambda_2,\) and Theorem \ref{thm:fibers+I} implies that \(\con_{\Lambda}\leq (I-\obeta_0+1,I-\obeta_0+1).\) To prove the equality, it is enough to show that \((I-\obeta_0,I-\obeta_0)\in\Lambda\) is a maximal point (in fact an absolute maximal) of \(\Lambda.\)
\medskip

    By Theorem \ref{thm:fibers+I}, we get \(F_{\{i\}}^{\Lambda}(I-\obeta_0,I-\obeta_0)\neq \infty\) for \(i\in\{1,2\}\) (see Definition \ref{inf-fiber}). Let us consider for example \(i=2,\) as the other case follows similarly. As \(F_{\{2\}}^{\Lambda}(I-\obeta_0,I-\obeta_0)\neq\infty\) then there exist \(\alpha \in \overline{F}_{\{2\}}(\Lambda,(I-\obeta_0,I-\obeta_0)) \) such that \(\alpha\) is a maximal element of \(\Lambda.\) As the number of branches is \(r=2\) then the notion of maximal, relative maximal and absolute maximal agree. Hence \(\alpha=(\alpha_1,I-\obeta_0)\) with $\alpha_1\geq I-\obeta_0$ is an absolute maximal in \(\Lambda.\) Let us assume \(\alpha\neq (I-\obeta_0,I-\obeta_0),\) then \(\alpha=(I-\obeta_0+n,I-\obeta_0)\) for some \(n\in\mathbb{N}\setminus\{0\}\) is such that \(F_{\{1\}}(\Lambda,\alpha)=F_{\{2\}}(\Lambda,\alpha)=\emptyset\) and \(\alpha_1\in I+\Delta_2\) since $\con(\Delta_2)=-\obeta_0+1,$ but this is a contradiction with Theorem \ref{thm:fibers+I}.
\end{proof}

The previous results allows us to compute the invariant $\Theta_2$ described in Section 2 for the fractional ideal $J=\frac{\Omega_f}{Tor(\Omega_f)}\simeq \varphi^*(\Omega_f)$ (see \ref{quot-torsion}) whose values set is $E=\Lambda_f$. In fact, according to Remark \ref{rem-inf-fiber} we get
$\omega\in f_1\cdot \Omega^1(\log\ C_1)$ if and only if $v_1(\overline{\omega})=\infty$. So, we have that $\mathcal{N}_1(J)=f_1\cdot \Omega^1(\log\ C_1)$ and, by (\ref{thetai}) we get
$\Theta_2=\sharp (\Lambda_2\setminus v_2(f_1\cdot \Omega^1(\log\ C_1)))$.

\begin{corollary}\label{cor-theta}
With the previous notation we have that $$\Theta_2=I-\beta_0+1-\sharp\mathbb{N}\setminus \Lambda_2-\sharp\{\lambda>\beta_0 :\ \lambda\not\in\Lambda_1\}.$$
\end{corollary}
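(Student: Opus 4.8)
The plan is to start from the identity recorded just above the statement, $\Theta_2=\sharp\big(\Lambda_2\setminus v_2(f_1\cdot\Omega^1(\log\ C_1))\big)$, and to feed into it the explicit description provided by Theorem \ref{thm:fibers+I}. Since $I>n_g\obeta_g$, that theorem gives $v_2(f_1\cdot\Omega^1(\log\ C_1))=I+\Delta_{f_1}$, with conductor $I-\obeta_0+1$, so that $\Theta_2=\sharp\big(\Lambda_2\setminus(I+\Delta_{f_1})\big)$. The whole task then becomes to count this finite set, and the most economical way is to realise it as a genuine difference of two counting functions on a common window; for this I first need the smaller set to sit inside the larger one.

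The crucial observation, which I expect to be the real engine of the proof, is the inclusion $I+\Delta_{f_1}\subseteq\Lambda_2$. This holds for a structural reason: the Saito module $f_1\cdot\Omega^1(\log\ C_1)$ is, by its very definition in \eqref{log-dif}, a submodule of the module $\Omega^1$ of \emph{holomorphic} $1$-forms. Hence every $\omega\in f_1\cdot\Omega^1(\log\ C_1)$ is in particular a holomorphic form, and its value $v_2(\omega)$ on the branch $C_2$, whenever finite, belongs to $\Lambda_2=\{v_2(\omega)\,:\,\omega\in\Omega^1\}$. Therefore $v_2(f_1\cdot\Omega^1(\log\ C_1))\subseteq\Lambda_2$, that is $I+\Delta_{f_1}\subseteq\Lambda_2$. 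This is exactly what guarantees that the subtraction performed below is legitimate: without it, the computation would be off by $\sharp\big((I+\Delta_{f_1})\setminus\Lambda_2\big)$, and it is precisely this inclusion that forces that correction term to vanish.

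With the inclusion in hand I fix $\gamma\gg0$ (any $\gamma\ge I-\obeta_0+1$ works, since both $\Lambda_2$ and $I+\Delta_{f_1}$ then contain $[\gamma,\infty)$, and since $\gamma>\con(S)\ge\con(\Lambda_2)$). As $(I+\Delta_{f_1})\cap[0,\gamma)\subseteq\Lambda_2\cap[0,\gamma)$, one gets $\Theta_2=\sharp\big(\Lambda_2\cap[0,\gamma)\big)-\sharp\big((I+\Delta_{f_1})\cap[0,\gamma)\big)$. The first count is immediate, $\sharp\big(\Lambda_2\cap[0,\gamma)\big)=\gamma-\sharp(\mathbb N\setminus\Lambda_2)$. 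For the second I translate by $I$ and count $\Delta_{f_1}$ below $\gamma-I$, using $\con(\Delta_{f_1})=-\obeta_0+1$ together with the residue duality \eqref{relation}, $\lambda\in\Lambda_1\Leftrightarrow-\lambda\notin\Delta_{f_1}$. This duality both pins down $\min\Delta_{f_1}=1-\con(\Lambda_1)$ (the most negative residue value corresponds to the largest gap of $\Lambda_1$) and identifies the gaps of $\Delta_{f_1}$ lying below its conductor with the elements of $\Lambda_1$ in $[\obeta_0,\con(\Lambda_1)-1]$, whose number is $(\con(\Lambda_1)-\obeta_0)-\sharp\{\lambda>\obeta_0:\lambda\notin\Lambda_1\}$. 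Assembling these gives $\sharp\big((I+\Delta_{f_1})\cap[0,\gamma)\big)=\gamma-I-1+\obeta_0+\sharp\{\lambda>\obeta_0:\lambda\notin\Lambda_1\}$.

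Subtracting the two counts, the terms in $\gamma$ and in $\con(\Lambda_1)$ cancel, leaving exactly $\Theta_2=I-\obeta_0+1-\sharp(\mathbb N\setminus\Lambda_2)-\sharp\{\lambda>\obeta_0:\lambda\notin\Lambda_1\}$, which is the asserted formula. The remaining points are pure bookkeeping: one must check that every gap of $\Lambda_1$ above $\obeta_0$ indeed lies below $\con(\Lambda_1)$ (so the window $[\obeta_0,\con(\Lambda_1)-1]$ captures all of them) and that $\min\Delta_{f_1}>-I$ (so no spurious elements are counted). Both follow from $\con(\Lambda_i)\le\con(S)<I-\obeta_0$, and it is here that the standing hypothesis $I>n_g\obeta_g$ re-enters, exactly as it does in the proof of Theorem \ref{thm:conductorkahler1}.
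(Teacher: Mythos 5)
Your proof is correct and follows essentially the same route as the paper: both reduce $\Theta_2$ to $\sharp\big(\Lambda_2\setminus(I+\Delta_{f_1})\big)$ via Theorem \ref{thm:fibers+I}, count the two sets up to the common conductor $I-\obeta_0+1$, and convert the count of $\Delta_{f_1}$ below $-\obeta_0+1$ into the gaps of $\Lambda_1$ above $\obeta_0$ using the duality \eqref{relation}. The only difference is that you spell out the inclusion $I+\Delta_{f_1}\subseteq\Lambda_2$ needed to justify the cardinality subtraction, a point the paper uses tacitly.
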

\begin{proof}
By Theorem \ref{thm:fibers+I} we have that $v_2(f_1\cdot\Omega^1(\log\ C_1))=I+\Delta_{f_1}$ and $\con(v_2(f_1\cdot\Omega^1(\log\ C_1)))=I-\beta_0+1$.
Since $\con (\Lambda_2)\leq \con(v_2(f_1\cdot\Omega^1(\log\ C_1)))$ we have
$$\begin{array}{ll}\Theta_2 & =\sharp (\Lambda_2\setminus v_2(f_1\cdot \Omega^1(\log\ C_1))) \\
& =\sharp \{\alpha\in\Lambda_2:\ \alpha<I-\beta_0+1\}-\sharp\{\delta\in I+\Delta_{f_1}:\ \delta<I-\beta_0+1\} \\
& =I-\beta_0+1-\sharp \mathbb{N}\setminus\Lambda_2-\sharp\{\gamma\in\Delta_{f_1}: \gamma<-\beta_0+1\}
\end{array}.$$
It follows, by (\ref{relation}), that
\(\Theta_2=I-\beta_0+1-\sharp\mathbb{N}\setminus \Lambda_2-\sharp\{\lambda>\beta_0 :\ \lambda\not\in\Lambda_1\}\).
\end{proof}



\section{The Tjurina number for two branches}\label{sec:Tjurina2}

In Section \ref{sec:logdif} we have computed the conductor \(\con(\Lambda)\) of the value set \(\Lambda\) of K\"ahler differentials for a plane curve $C=C_1\cup C_2$ defined by \(f=f_1f_2\) such that \(C_1,C_2\) are equisingular with intersection multiplicity \(I> n_g\obeta_g\). Also, in Theorem \ref{thm:Tjurinaformula} we have shown that the Tjurina number of a reduced plane curve with any number of branches can be computed in terms of the distance \(d(\overline{\Lambda}\setminus S)\) between the semigroup of values of the curve \(S\) and the set \(\overline{\Lambda}=\Lambda\cup\{0\}.\) In addition, in section \ref{sec:semigroup} we explained how to compute this distance if one knows the conductor of the module. All this together allows us to provide explicit formulas for the Tjurina number in this case. 

\begin{theorem}\label{thm:taumin1}
   Let $C=C_1\cup C_2$ be a plane curve defined such that \(C_1,C_2\) are equisingular with values semigroup $S_i=\langle \obeta_0,\ldots ,\obeta_g\rangle$, conductor $\con(S_i)=c$ and intersection multiplicity \(I> n_g\obeta_g\).  Then, the Tjurina number of $C$ is given by
\[
  \tau=2I+c-1.\]
    In particular, \(\tau\) is constant in the equisingularity class of $C$. 
\end{theorem}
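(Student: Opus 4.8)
The plan is to feed the generalized Berger formula of Theorem~\ref{thm:Tjurinaformula}, $\tau=\mu-d(\overline{\Lambda}\setminus S)$, with the Milnor number from Remark~\ref{cond-milnor} and the information on $\Lambda$ gathered in Section~\ref{sec:logdif}. First I would record the topological inputs. Since $C_1,C_2$ are equisingular we have $c_1=c_2=c$ and $I_{1,2}=I$, so Remark~\ref{cond-milnor} gives $\mu=c_1+c_2+2I-2+1=2c+2I-1$, while \eqref{conductor} gives $\con_S=(c+I,c+I)$. Hence it suffices to prove $d(\overline{\Lambda}\setminus S)=c$, and by definition $d(\overline{\Lambda}\setminus S)=d_{\overline{\Lambda}}(0,\con_S)-d_S(0,\con_S)$.

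For the semigroup term, Example~\ref{delta} identifies $d_S(0,\con_S)$ with $\delta(C)=\delta_1+\delta_2+I$; using $c_i=2\delta_i$ this equals $c+I$. For the $\overline{\Lambda}$ term I would split the saturated chain at the minimum $m_{\Lambda_f}=(\obeta_0,\obeta_0)$ of $\Lambda_f$, which indeed lies in $\Lambda_f$ by Remark~\ref{hypo}. Because $m_{\Lambda_f}$ is the global minimum of $\Lambda_f$, it is $\le$ every nonzero element of $\overline{\Lambda}$; consequently the first step of every saturated chain out of $0$ must be $0<m_{\Lambda_f}$, so $d_{\overline{\Lambda}}(0,m_{\Lambda_f})=1$ and, since $\overline{\Lambda}$ and $\Lambda_f$ coincide above $m_{\Lambda_f}$, we get $d_{\overline{\Lambda}}(0,\con_S)=1+d_{\Lambda_f}(m_{\Lambda_f},\con_S)$.

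The heart of the matter is then to evaluate $d_{\Lambda_f}(m_{\Lambda_f},\con_S)$ via Theorem~\ref{thm:colengthcalculation} applied to $J=\varphi^*(\Omega_f)$ with $\gamma=\con_S$, which is $\ge\con_{\Lambda}$ by Theorem~\ref{thm:conductorkahler1}. Writing $\Lambda_i=\pr_i(\Lambda)$, which has minimum $\obeta_0$, and $g_i=\sharp(\mathbb{N}\setminus\Lambda_i)$, one gets $\sharp((\mathbb{N}+\obeta_0)\setminus\Lambda_i)=g_i-\obeta_0$, so the formula reduces to $\sum_{i=1}^{2}\bigl((c+I)-g_i-\Theta_i\bigr)$ with $\Theta_1=0$. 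I would then substitute $\Theta_2$ from Corollary~\ref{cor-theta}, rewriting its two gap counts as $\sharp(\mathbb{N}\setminus\Lambda_2)=g_2$ and $\sharp\{\lambda>\obeta_0:\lambda\notin\Lambda_1\}=g_1-\obeta_0$, which gives $\Theta_2=I+1-g_1-g_2$.

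The decisive point, and the expected main obstacle, is that the gap counts $g_1,g_2$ are genuine analytic invariants of the branches (this is precisely why the two branches may carry different Tjurina numbers), yet they must cancel from the total. They do: the second summand becomes $(c+I)-g_2-\Theta_2=c-1+g_1$, so the sum collapses to $(c+I-g_1)+(c-1+g_1)=2c+I-1$, independent of the analytic type. Therefore $d_{\overline{\Lambda}}(0,\con_S)=2c+I$, whence $d(\overline{\Lambda}\setminus S)=(2c+I)-(c+I)=c$ and $\tau=(2c+2I-1)-c=2I+c-1$. Finally, since $I$ and $c$ are equisingularity invariants, $\tau$ is constant on the equisingularity class, completing the argument.
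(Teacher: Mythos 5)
Your argument is correct and follows essentially the same route as the paper: the generalized Berger formula of Theorem \ref{thm:Tjurinaformula}, the identification $d_S(0,\con_S)=\delta(C)=c+I$, and the evaluation of $d_{\overline{\Lambda}}(0,\con_S)$ via Theorem \ref{thm:colengthcalculation} together with the expression for $\Theta_2$ from Corollary \ref{cor-theta}, with the gap counts $g_1,g_2$ cancelling exactly as in the paper. The only (cosmetic) difference is that you apply the colength formula once with $\gamma=\con_S$, whereas the paper splits the saturated chain at $\con(\Lambda)$ and computes the two pieces $d_\Lambda((\obeta_0,\obeta_0),\con(\Lambda))$ and $d_\Lambda(\con(\Lambda),\con(S))$ separately.
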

\begin{proof}
By Theorem \ref{thm:Tjurinaformula} and Remark \ref{cond-milnor} we get
\begin{equation}\label{tau1}
\tau=\mu-d(\overline{\Lambda}\setminus S)=2c+2I-1-d(\overline{\Lambda}\setminus S).
\end{equation}
Since the values semigroup $S$ of $C$ is such that $S\subseteq\overline{\Lambda}=\Lambda\cup\{(0,0)\}$ and $m_{\Lambda}=(\beta_0,\beta_0)$ we get 
\begin{equation}\label{eqn:auxpruebatau0}
d(\overline{\Lambda}\setminus S)  =d_{\overline{\Lambda}}((0,0),\con(S))-d_S((0,0),\con(S))  =d_\Lambda((\beta_0,\beta_0),\con(S))+1-(c+I),\end{equation}
where $d_S((0,0),\con(S))=\delta(C)=c+I$ (see Example \ref{delta}).

By Theorem \ref{thm:conductorkahler1} we have \(\con(\Lambda)=(I-\beta_0+1,I-\beta_0+1)<(I+c,I+c)=\con(S)\), so we get \begin{equation}\label{eqn:auxpruebatau1}d_\Lambda((\beta_0,\beta_0),\con(S))=d_\Lambda((\beta_0,\beta_0),\con(\Lambda))+d_\Lambda(\con(\Lambda),\con(S)).\end{equation}

If $J=\varphi^*(\Omega_f)$ then, by  Theorem \ref{thm:colengthcalculation}, we have that 
\[d_\Lambda((\beta_0,\beta_0),\con(\Lambda))=l\left ( \frac{J}{J(\con(\Lambda))}\right ) =\sum_{i=1}^{2}(I-\beta_0+1-\beta_0-\sharp ((\mathbb{N}+\beta_0)\setminus \Lambda_i)-\Theta_i),\]
where $\Theta_1=0$ and, by Corollary \ref{cor-theta}, $\Theta_2=I-\beta_0+1-\sharp\mathbb{N}\setminus \Lambda_2-\sharp\{\lambda>\beta_0 :\ \lambda\not\in\Lambda_1\}$.

Since, $\sharp\mathbb{N}\setminus\Lambda_2=\beta_0+\sharp((\mathbb{N}+\beta_0)\setminus\Lambda_2)$ and $(\mathbb{N}+\beta_0)\setminus\Lambda_1=\{\lambda>\beta_0:\ \lambda\not\in\Lambda_1\}$ we get 
\begin{equation}
    \label{eqn:auxpruebatau2}
    d_\Lambda((\beta_0,\beta_0),\con(\Lambda))=I-2\beta_0+1.
\end{equation}

Considering $J=\{\omega\in\varphi^*(\Omega_f): \vu (\omega)\geq\con(\Lambda)\}$ then $\vu(J)=(I-\beta_0+1,I-\beta_1+1)+\mathbb{N}^2$ and, by Theorem \ref{thm:colengthcalculation}, we have that \begin{equation}    \label{eqn:auxpruebatau3}   d_\Lambda(\con(\Lambda),\con(S))=l\left ( \frac{J}{J(\con(S))}\right )=2(I+c-(I-\beta_0+1))=2(c+\beta_0-1).\end{equation}

The expressions (\ref{eqn:auxpruebatau2}) and (\ref{eqn:auxpruebatau3}) give us, by (\ref{eqn:auxpruebatau1}),  that $d_\Lambda((\beta_0,\beta_0),\con(S))=I+2c-1$. So, by (\ref{eqn:auxpruebatau0}) we have $d(\overline{\Lambda}\setminus S)  =c$ and, consequently by (\ref{tau1})
$$\tau=2I+c-1.$$
\end{proof}

Notice that, by the previous result, we get $\mu-\tau=c=\mu_1=\mu_2$ for a plane curve $C=C_1\cup C_2$ with $C_1$ and $C_2$ equisingular and intersection multiplicity $I>n_g\obeta_g$ where $\mu_i=c$ is the Milnor number of $C_i$.

\begin{ex}
Let us consider
\[\begin{array}{l}
f=y^6-3x^3y^4-2x^5y^3+3x^6y^2-6x^8y-x^9+x^{10}\vspace{0.2cm}\\
g=y^6-3x^3y^4+4x^5y^3+\left ( 3x^6-\frac{3}{2}x^7\right )y^2-\left (12x^8+\frac{3}{8}x^9 \right )y-x^9+\frac{13}{2}x^{10}-\frac{1}{64}x^{11}.\vspace{0.2cm}\\
h=y^6-3y^4x^3+\left(-\frac{6}{19}x^6+4x^5\right )y^3+\left (-\frac{3}{27436}x^{10}+\frac{9}{76}x^8-\frac{3}{2}x^7+3x^6\right )y^2+\\ \hspace{0.75cm}+\left (-\frac{3}{1042568}x^{13}+\frac{3}{13718}x^{12}-\frac{9}{1444}x^{11}+\frac{3}{38}x^{10}-\frac{9}{152}x^9-12x^8\right )y-\frac{1}{3010936384}x^{17}\vspace{0.2cm}\\ \hspace{0.75cm}+\frac{3}{79235168}x^{16}-\frac{15}{8340544}x^{15}+\frac{5}{109744}x^{14}-\frac{301}{438976}x^{13}+\frac{153}{11552}x^{12}-\frac{547}{1216}x^{11}+\frac{13}{2}x^{10}-x^9\end{array}\]    
We have that $f, g$ and $h$ are equisingular plane branches sharing the semigroup $S=\langle 6,9,19\rangle$ with conductor $c=42$ and  $I(g,h)=63>I(f,h)=I(f,g)=58>n_2\overline{\beta}_2=57$. 

In this case, using the \textsc{Singular}    
software \cite{singular}, we get $\tau(f)=35, \tau(g)=36, \tau(h)=37$, 
\[\tau(fg)=2I(f,g)+c-1=157=2I(f,h)+c-1=\tau(f,h)\ \ \mbox{and}\ \ \tau(gh)=169=2I(g,h)+c-1.\]
that illustrate Theorem \ref{thm:taumin1}.
\end{ex}

In \cite{quotpat}, the inequality \(\mu/\tau<4/3\) was showed for any plane curve. The previous results allows to provide a new proof of that inequality in the case of a plane curve with two equisingular branches and \(I>n_g\obeta_g.\)
\begin{corollary}
   Let $C=C_1\cup C_2$ be a plane curve defined such that \(C_1,C_2\) are equisingular and intersection multiplicity \(I> n_g\obeta_g\).  Then, \(\mu/\tau<4/3.\)
\end{corollary}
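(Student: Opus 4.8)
The plan is to reduce the inequality $\mu/\tau < 4/3$ to a single numerical comparison between the intersection multiplicity $I$ and the conductor $c$ of the branch semigroup, and then to settle that comparison using the standard inequalities satisfied by the generators $\obeta_0,\dots,\obeta_g$.

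First I would record the two exact identities already available. By Theorem \ref{thm:taumin1} we have $\tau = 2I + c - 1$, and by the remark following it (equivalently by Remark \ref{cond-milnor}) we have $\mu - \tau = c$, so $\mu = \tau + c$. Since $\tau > 0$, the inequality $\mu/\tau < 4/3$ is equivalent to $3\mu < 4\tau$, i.e. to $3(\tau + c) < 4\tau$, i.e. to $3c < \tau = 2I + c - 1$. This simplifies to $2c + 1 < 2I$, which, because all quantities are integers, is equivalent to $I > c$. Thus the entire statement collapses to proving the single inequality $I > c$.

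Next I would deduce $I > c$ from the hypothesis $I > n_g\obeta_g$ by showing that $n_g\obeta_g > c$ holds for any plane-branch semigroup. Using the closed form \eqref{fy-cond}, namely $c = \sum_{i=1}^g (n_i-1)\obeta_i - \obeta_0 + 1$, I would isolate the top term and compute
\[
n_g\obeta_g - c = \obeta_g - \sum_{i=1}^{g-1}(n_i-1)\obeta_i + \obeta_0 - 1.
\]
The key input is the classical chain of inequalities $n_i\obeta_i < \obeta_{i+1}$ for $1 \leq i \leq g-1$, valid for the value semigroup of any plane branch; these give $(n_i-1)\obeta_i < \obeta_{i+1} - \obeta_i$, and summing telescopically yields $\sum_{i=1}^{g-1}(n_i-1)\obeta_i < \obeta_g - \obeta_1$. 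Substituting, $n_g\obeta_g - c > \obeta_1 + \obeta_0 - 1 > 0$, so $n_g\obeta_g > c$. Combined with the hypothesis this gives $I > n_g\obeta_g > c$, which is exactly $I > c$, and the corollary follows.

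The argument is essentially arithmetic once Theorem \ref{thm:taumin1} is in hand, so there is no real obstacle; the only genuine content is the comparison $n_g\obeta_g > c$, and even that is a standard property of plane-branch semigroups rather than a new difficulty. The one point to watch is the strictness: the hypothesis $I > n_g\obeta_g$ is strict, and together with $n_g\obeta_g > c$ it comfortably produces $2I \geq 2c + 2 > 2c + 1$, so no borderline case can occur.
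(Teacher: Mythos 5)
Your proof is correct and follows essentially the same route as the paper: both reduce the inequality via the formula $\tau=2I+c-1$ of Theorem \ref{thm:taumin1} to the comparison $I>n_g\obeta_g>c$. The only difference is that you justify $n_g\obeta_g>c$ in detail from the telescoping of $n_i\obeta_i<\obeta_{i+1}$, whereas the paper asserts this standard fact without proof.
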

\begin{proof}
    From Theorem \ref{thm:taumin1} we have 
    \(4\tau-3\mu=2I-2c-1\) where \(c=\mu_i\) is the Milnor number of the branch $C_i$. By hypothesis \(I>n_g\obeta_g>c\), then the results follows.
\end{proof}

\subsection{Remarks on the minimal Tjurina number in more general cases}
To conclude, we draw attention to some challenges concerning the Tjurina number in a more general setting.
\medskip

First, Theorem \ref{thm:taumin1} establishes that for a curve with two equisingular branches, the condition \(I>n_{g}\obeta_g\) is sufficient to guarantee a constant Tjurina number within the equisingularity class. It is natural to ask up to what extent this holds in the case of a curve with more than two branches. The following example shows that if \(C=\cup_{i=1}^{r}C_i\) is a curve with \(r\geq 3\) branches, then condition \(I_{i,j}:=[C_i,C_j]_0>n_{g}\obeta_g\) is not enough to have constant Tjurina number in the whole equisingularity class.
\begin{ex}
    Consider \(f_1=y^5-x^8+2x^5y^2,\) \(f_2=y^5-x^8+3x^5y^2,\) \(f_3=y^5-x^8+x^4y^3\) and \(f_4=y^5-x^8+7x^5y^2.\) We have that \(f=f_1 f_2 f_3\) and \(g=f_1 f_2 f_4\) define two equisingular curves each one with three
     equisingular branches with values semigroup $\langle 5,8\rangle$, all of them have intersection multiplicity \(I_{i,j}=41>40=n_{g}\obeta_g.\) A computation with {\sc Singular} \cite{singular} shows \(\tau(f)=258\neq261=\tau(g).\)
\end{ex}
It would be certainly good to obtain sufficient topological conditions for a curve with several branches to have constant Tjurina number in the equisingularity class. In \cite{hernandessemiroots,LuengoPfister88} some families of plane branches with constant Tjurina number in the equisingularity class are shown.
One could think that for a curve with two branches, to have constant Tjurina number in the equisingularity class of each of the branches could be a sufficient condition to have constant Tjurina number in the equisingularity class of the curve. The following example shows that this is not enough.

\begin{ex}\label{ex:conjeture}Let us consider the branches\[ \begin{array}{l}f_1=(y^5-x^7)^2-x^{10}y^3,\ \ \ f_2=(y^5-x^7)^2-5x^{10}y^3\ \ \ \text{and}\ \ \ f_3=(y^5-x^7+x^4y^3)^2-3x^{10}y^3.\end{array}\]All branches are equisingular with semigroup \(\langle 10,14,71\rangle.\) and, by \cite{hernandessemiroots}, for any branch in this equisingularity class the Tjurina number is constant \(\tau(f_i)=94.\) Let us denote \(f=f_1f_2\) and \(g=f_2f_3.\) In both cases \([f_i,f_j]_0=142=2\cdot 71=n_{g}\obeta_g.\) A calculation with {\sc Singular} \cite{singular} shows that \(\tau(f)=402\neq 406=\tau(g).\)
\end{ex}

Following with Example \ref{ex:conjeture}, we observe that it is quite close to the curves considered in Theorem \ref{thm:taumin1}. The difficulty here relies on the remaining cases of (b) of Proposition \ref{prop:values+I}, i.e. to compute values, with respect to $f_2$, of those differentials \(\omega\in \mathcal{E}(f_1)\cap f_1\cdot\Omega^1(\log\ C_1)\) such that \(e_{g-1}\nmid res(\omega).\) In that cases, one can check that the initial term in Equation \eqref{eqn:proofvalue+I} cancels and one need to impose some open conditions in order to guarantee the value of $\omega$. A careful analysis of this situation leads us to think that in the case of a plane curve with two equisingular branches with \(I=n_{g}\obeta_g\) there should exists an open Zariski set for which \(v_2(f_1\cdot\Omega^1(\log\ C_1))=I+1+\Delta_{f_1}.\) This leads us to propose the following conjecture.
\begin{conj}\label{conj:taumin2}
    Let \(C=C_1\cup C_2\) be a plane curve with two equisingular branches with semigroup \(S=S_1=S_2=\langle\obeta_0,\dots,\obeta_g\rangle\) and \(I=[C_1,C_2]_0=n_{g}\obeta_g.\) Denote by \(c\) the conductor of \(S.\) Then, the minimal Tjurina number in the equisingularity class of $C$ is
    \[\tau_{min}=2I+c.\]
\end{conj}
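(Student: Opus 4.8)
The plan is to mirror the proof of Theorem~\ref{thm:taumin1} line by line, the only---but decisive---difference being the value of $\con(\Lambda)$. By Theorem~\ref{thm:Tjurinaformula} and Remark~\ref{cond-milnor} we still have $\mu=2c+2I-1$ and $\tau=\mu-d(\overline{\Lambda}\setminus S)$, and by Example~\ref{delta} the distance $d_S((0,0),\con(S))=\delta(C)=c+I$ is unchanged. Since $\mu$ and $\delta$ are topological, the entire problem reduces to controlling $\Lambda$, and in particular its conductor, across the equisingularity class. The minimal Tjurina number corresponds to the \emph{maximal} value of $d(\overline{\Lambda}\setminus S)$, which is attained when $\con(\Lambda)$ is as small as possible; so I would first identify the generic (smallest) conductor and then argue monotonicity to conclude minimality.

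The key new input is the analysis of case (b) of Proposition~\ref{prop:values+I} in the excluded subcase $e_{g-1}\nmid res(\omega)$. When $I=n_g\obeta_g=e_{g-1}\obeta_g$, the index $k$ of Lemma~\ref{lem:expBM} equals $g-1$ exactly in this subcase, so the leading coefficient $(e_k\obeta_{k+1}-I)=e_{g-1}\obeta_g-I$ of \eqref{eqn:proofvalue+I} vanishes and the computation of $v_2(\omega)$ must be pushed to the next order. I would expand $\varphi_2^\ast((f_1)_y\omega)=\varphi_2^\ast(Mf_1\,\textup{d}x-B\,\textup{d}f_1)$ one step further, using \eqref{equal-coeficient} to control exactly which Puiseux coefficients $a_l^{(i)}$ enter, and identify the next coefficient as an explicit polynomial $P$ in those coefficients. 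The expected outcome is that $P\neq 0$ on a Zariski-dense open set, yielding $v_2(\omega)=I+1+v_1(res(\omega))$ and hence, reproducing the argument of Theorem~\ref{thm:fibers+I} with this shift,
\[
v_2(f_1\cdot\Omega^1(\log\ C_1))=I+1+\Delta_{f_1},\qquad \con(v_2(f_1\cdot\Omega^1(\log\ C_1)))=I-\obeta_0+2,
\]
so that $\con(\Lambda)=(I-\obeta_0+2,\,I-\obeta_0+2)$ for a generic curve in the class.

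Granting this conductor, the rest is the bookkeeping of Theorem~\ref{thm:taumin1} with $I-\obeta_0+1$ replaced by $I-\obeta_0+2$. A shifted version of Corollary~\ref{cor-theta} gives $\Theta_2=I-2\obeta_0+2-\sharp((\mathbb{N}+\obeta_0)\setminus\Lambda_2)-\sharp((\mathbb{N}+\obeta_0)\setminus\Lambda_1)$, whence Theorem~\ref{thm:colengthcalculation} yields $d_\Lambda((\obeta_0,\obeta_0),\con(\Lambda))=I-2\obeta_0+2$ and $d_\Lambda(\con(\Lambda),\con(S))=2(c+\obeta_0-2)$. Adding these gives $d_\Lambda((\obeta_0,\obeta_0),\con(S))=I+2c-2$, so by the analogue of \eqref{eqn:auxpruebatau0} one finds $d(\overline{\Lambda}\setminus S)=c-1$ and finally $\tau=\mu-(c-1)=2I+c$. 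To conclude that $2I+c$ is the \emph{minimum}, I would observe that any special analytic type produces higher-order cancellations, hence a conductor $\con(\Lambda)\geq(I-\obeta_0+2,\,I-\obeta_0+2)$ componentwise; since $d(\overline{\Lambda}\setminus S)$ is non-increasing in $\con(\Lambda)$, such curves satisfy $\tau\geq 2I+c$, with equality exactly on the open set above.

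The main obstacle is precisely the second-order expansion of \eqref{eqn:proofvalue+I} after the leading term cancels: one must keep track of the next monomial in $\varphi_2^\ast(Mf_1\,\textup{d}x-B\,\textup{d}f_1)$, identify the resulting coefficient $P$ as a polynomial in the Puiseux coefficients $a_l^{(i)}$ in a controlled range (now beyond $\beta_g$, where $C_1$ and $C_2$ may genuinely differ), and prove that its non-vanishing locus is a nonempty Zariski-open set---equivalently, that $P$ is not identically zero. The secondary difficulty is the monotonicity/minimality step: unlike in Theorem~\ref{thm:taumin1}, where $\tau$ is constant, here one must genuinely compare analytic types and rule out that some special configuration produces a \emph{smaller} conductor, and hence a smaller $\tau$, than the generic one.
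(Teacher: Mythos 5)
This statement is a \emph{conjecture} in the paper: the authors give no proof, only the heuristic remark that a second-order analysis of case (b) of Proposition~\ref{prop:values+I} ``leads us to think'' that generically \(v_2(f_1\cdot\Omega^1(\log\ C_1))=I+1+\Delta_{f_1}\), plus the verification in the one-Puiseux-pair case via \cite{BGM88}. So there is no paper proof to compare against, and your proposal must stand on its own. It does not: it is a plan whose two load-bearing steps are exactly the ones you flag as ``obstacles'' and leave unexecuted. First, the generic identity \(v_2(\omega)=I+1+v_1(res(\omega))\) when the leading coefficient \((e_{g-1}\obeta_g-I)\) of \eqref{eqn:proofvalue+I} vanishes is asserted, not proved; carrying it out is genuinely harder than in the \(I>n_g\obeta_g\) case because by \eqref{equal-coeficient} the branches now agree only for \(k<\beta_g\), so Remark~\ref{hypo} and Lemma~\ref{lem:expBM} degrade (the coefficients \(b_i\) depend on the branch when \(q=g\)), the next-order coefficient involves Puiseux coefficients where \(C_1\) and \(C_2\) genuinely differ, and one must still rule out that the jump is by more than \(1\) on the putative open set and re-establish the analogues of Lemmas~\ref{lemma1}--\ref{lemma2} and of the maximality argument in Theorem~\ref{thm:conductorkahler1} for the point \((I-\obeta_0+1,I-\obeta_0+1)\). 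Your downstream bookkeeping (the shifted Corollary~\ref{cor-theta}, \(d_\Lambda((\obeta_0,\obeta_0),\con(S))=I+2c-2\), hence \(\tau=2I+c\)) is arithmetically consistent, but it is entirely conditional on this unproved input.

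Second, the minimality step is not just incomplete but rests on a claim that is not justified and is dubious as stated: that \(d(\overline{\Lambda}\setminus S)\) is ``non-increasing in \(\con(\Lambda)\).'' The distance \(d(\overline{\Lambda}\setminus S)\) depends on the full structure of \(\Lambda\) (through \(\Lambda_1\), \(\Lambda_2\), \(\Theta_2\) and the fiber structure), not only on its conductor; two value sets with comparable conductors need not have comparable distances to \(S\), and for special analytic types the sets \(\Lambda_i\) themselves move. What you actually need is the inequality \(d(\overline{\Lambda}\setminus S)\leq c-1\) for \emph{every} curve in the class, equivalently \(\tau\geq 2I+c\) everywhere, together with equality on a nonempty (Zariski-open) subset; deducing this from a componentwise bound on \(\con(\Lambda)\) alone is a non sequitur. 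Example~\ref{ex:conjeture} in the paper (\(\tau=402\) versus \(\tau=406\) with \(2I+c=402\)) shows the quantity really does vary in the class, so the comparison across analytic types is an essential and unaddressed part of any proof.
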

In fact, Conjecture \ref{conj:taumin2} is actually true for \(S=\langle\obeta_0,\obeta_1\rangle\) as showed in \cite[Tableu 3, \(\delta\) pair]{BGM88} (see also \cite{heroy89}).

\medskip

\printbibliography


\end{document}